\DeclareMathOperator{\area}{area}
\DeclareMathOperator{\vol}{vol}
\newtheorem{algorithm}{Algorithm}
\title{Totally Geodesic Surfaces in Hyperbolic 3-Manifolds: Algorithms and Examples}
\titlerunning{Totally Geodesic Surfaces in Hyperbolic 3-Manifolds}
\author{Brannon Basilio}
{Dept. of Math., University of Illinois Urbana-Champaign, USA}{basilio3@illinois.edu}{}{}
\author{Chaeryn Lee}
{Dept. of Math., University of Illinois Urbana-Champaign, USA}{chaeryn2@illinois.edu}{}{}
\author{Joseph Malionek}
{Dept. of Math., University of Illinois Urbana-Champaign, USA}{jdm7@illinois.edu}{}{}
\authorrunning{B. Basilio, C. Lee, and J. Malionek}
\date{\today}
\keywords{totally geodesic, Fuchsian group, hyperbolic, knot complement, computational topology, low-dimensional topology}
\newcommand{\Z}{\mathbb{Z}}
\newcommand{\R}{\mathbb{R}}
\newcommand{\C}{\mathbb{C}}
\newcommand{\Q}{\mathbb{Q}}
\newcommand{\pslc}{PSL(2, \C)}
\newcommand{\pslr}{PSL(2, \R)}
\newcommand{\slc}{SL(2, \C)}
\newcommand{\slr}{SL(2, \R)}
\renewcommand{\H}{\mathbb{H}}
\DeclareMathOperator{\tr}{tr}
\begin{document}

\maketitle

\begin{abstract}
    Finding a \textit{totally geodesic surface}, an embedded surface where the geodesics in the surface are also geodesics in the surrounding manifold, has been a problem of interest in the study of 3-manifolds.
    This has especially been of interest in hyperbolic 3-manifolds and \textit{knot complements}, complements of piecewise-linearly embedded circles in the 3-sphere.
    This is due to Menasco-Reid's conjecture stating that hyperbolic knot complements do not contain such surfaces.
    Here, we present an algorithm that determines whether a given surface is totally geodesic and an algorithm that checks whether a given 3-manifold contains a totally geodesic surface.
    We applied our algorithm on over 150,000 3-manifolds and discovered nine 3-manifolds with totally geodesic surfaces.
    Additionally, we verified Menasco-Reid's conjecture for knots up to 12 crossings. 
\end{abstract}

\section{Introduction}\label{Introduction}
Studying surfaces in 3-manifolds has been a theme since the field of 3-manifolds began.
Knowing the topology of these surfaces may give some information about the ambient 3-manifolds.
In particular, this paper focuses on hyperbolic 3-manifolds containing surfaces satisfying the property that the geodesic between any two points of the surface is the geodesic of those same two points when viewed as points in the 3-manifold.
These surfaces are called \textit{totally geodesic}.
For precise definitions and conventions please see Section~\ref{Background}.

One class of 3-manifolds we are interested in is \textit{knot complements}, complements of a properly embedded circle into $S^3$. 
Similarly, \textit{link complements} are complements of a disjoint union of properly embedded circles into $S^3$.
We explore surfaces in hyperbolic knots in the context of a conjecture of Menasco and Reid in~\cite{MenascoReid1992}: 

\begin{conjecture}\label{MenReid}
    Let $K$ be a knot in $S^3$ whose exterior $S^3\setminus K$ has a complete hyperbolic structure. Then $S^3 \setminus K$ does not contain a closed, embedded, totally geodesic surface.
\end{conjecture}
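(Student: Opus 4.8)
Conjecture~\ref{MenReid} is open in general, so the realistic goal is twofold: a \emph{theoretical reduction} constraining where a closed totally geodesic surface could possibly sit, together with a \emph{certified algorithm} that, for any given knot, either produces such a surface or proves none exists. Together these verify the conjecture for any explicit family of knots and isolate exactly what a general proof would still need. Throughout, write $S^3\setminus K=\H^3/\Gamma$ with $\Gamma<\pslc$; a closed embedded totally geodesic surface $\Sigma$ then lifts to a totally geodesic copy of $\H^2$ in $\H^3$ whose stabilizer $\Delta<\Gamma$ is a cocompact Fuchsian group with $\Sigma=\H^2/\Delta$. (A totally geodesic surface in a cusped manifold is either closed or finite-area with cusps, since the manifold has no totally geodesic boundary; the conjecture concerns the closed case, the one in which $\Delta$ is cocompact.)

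\textbf{Step 1: an algebraic obstruction.} After conjugating we may take $\Delta<\pslr$, so every element of $\Delta$ has real trace; hence the invariant trace field $k\Sigma$ is a number field lying in $\R$, and the invariant quaternion algebra $A\Sigma$ is defined over $k\Sigma$ and split at that real place. Since $\Delta<\Gamma$ forces $k\Sigma\subseteq k(S^3\setminus K)$, and $S^3\setminus K$ has finite volume so $k(S^3\setminus K)$ is a number field, the first thing to do is compute $k(S^3\setminus K)$ and $A(S^3\setminus K)$ and ask whether they can accommodate such a subfield and subalgebra: apply the commensurability criteria of Maclachlan--Reid, and in the arithmetic case Reid's characterization of which commensurability classes contain a totally geodesic surface at all. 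For a substantial proportion of hyperbolic knots this already yields a contradiction.

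\textbf{Step 2: a certified surface search.} When Step~1 is inconclusive, enumerate candidates: triangulate $S^3\setminus K$, compute the complete hyperbolic structure (say with SnapPy), and list the incompressible, $\partial$-incompressible normal surfaces, since a totally geodesic surface is isotopic to a normal surface of least weight in its isotopy class. For a candidate $\Sigma$ with $\pi_1\Sigma=\langle g_1,\dots,g_n\rangle$, the question ``is $\Sigma$ totally geodesic?'' becomes: is the holonomy image $\langle\rho(g_1),\dots,\rho(g_n)\rangle<\pslc$ conjugate into $\pslr$? This is decidable --- test whether $\tr\rho(g_i)$ and $\tr\rho(g_ig_j)$ can be made simultaneously real by a single conjugation, equivalently whether the group's fixed-point set on $\partial\H^3$ is a round circle --- and can be certified with interval arithmetic, or made exact using the algebraic numbers coming from the gluing equations. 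Running this over all candidate surfaces decides the conjecture for that $K$; running it over a census (all knots up to $12$ crossings) verifies it there.

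\textbf{The main obstacle.} The real difficulty is uniformity: there are infinitely many hyperbolic knots, so no computation proves the conjecture, and Steps~1--2 only reduce it to the statement that \emph{no} hyperbolic knot group contains a cocompact Fuchsian subgroup whose limit set is a round circle. I expect progress to come from families with explicit geometry --- for two-bridge and Montesinos knots the decomposition into rational tangles produces concrete essential surfaces (the checkerboard-type surfaces) that a hypothetical $\Sigma$ would have to meet essentially, which should extend the Menasco--Reid argument beyond the alternating case --- together with the constraints on how closed essential surfaces can sit relative to the cusp of a knot complement. Marrying the arithmetic obstruction of Step~1 to such a tangle-by-tangle analysis seems the most plausible route to the general statement, and making it uniform over \emph{all} knots is precisely where the problem stays hard.
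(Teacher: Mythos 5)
The statement you were given is a conjecture, not a theorem: the paper does not prove it, and neither do you --- you correctly frame the realistic goal as verification on explicit families plus a reduction of the general case. Your Step~2 is essentially the paper's strategy (Algorithms~\ref{skeletal_embedding}--\ref{finding_totally_geodesic}): enumerate essential normal surfaces, push $\pi_1$ of each candidate through the holonomy representation, and test whether the image is Fuchsian via reality of traces. Your Step~1, the arithmetic pre-filter through the invariant trace field and quaternion algebra, is a genuine addition not present in the paper; it would cheaply kill many knots before any surface enumeration and is a sensible complement to the paper's purely surface-by-surface computation. Your closing remarks about Montesinos and tangle decompositions also track the known partial results the paper cites.

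That said, Step~2 as written has three gaps relative to a working algorithm. First, termination: ``list the incompressible normal surfaces'' is not a finite task without a bound on Euler characteristic, since a triangulation can carry essential surfaces of arbitrarily large genus. The paper supplies this via Miyamoto's inequality (Lemma~\ref{VolumeGenus}), which bounds $\lvert\chi(F)\rvert$ by $\vol M/(4\pi\mu_3(0))$; you need some such bound for the enumeration to halt. Second, your trace test is slightly off: traces are conjugation invariants, so there is nothing to ``make real by a single conjugation'' --- one simply checks whether they \emph{are} real; and by Maclachlan--Reid (Lemma~\ref{generator_G}) the trace field is generated by traces of generators, \emph{pairs}, and \emph{triples} $g_ig_jg_k$, so checking only $g_i$ and $g_ig_j$ is insufficient. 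One also needs irreducibility plus discreteness to exclude the $SU(2)$ alternative. Third, real traces give Fuchsian, not totally geodesic: an orientable Fuchsian surface may be the double cover of an embedded non-orientable totally geodesic surface rather than being isotopic to a totally geodesic surface itself, so a further check (the paper's step~(g) of Algorithm~\ref{totally_geodesic}) is required before reporting a positive. None of these affects your overall framing, but each would break a literal implementation.
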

In reference to Conjecture~\ref{MenReid}, we assume throughout that all surfaces are closed and embedded, unless stated otherwise. 
Conjecture~\ref{MenReid} is known to hold for the following classes of knots: alternating knots \cite{MenascoReid1992}, Montesinos knots \cite{Oertel1984}, 3-bridge knots and double torus knots \cite{IchiharaOzawa}.
However, Conjecture~\ref{MenReid} does not extend to links, as there are links that contain such surfaces (see Figure 2 in \cite{MenascoReid1992}). For more details, see Appendix~\ref{menasco_reid_appendix}.

Although the context of the conjecture focuses on knot complements, we looked at many different censuses of 3-manifolds throughout this paper in order to find totally geodesic surfaces. 
We looked more generally at the class of hyperbolic manifolds called \textit{cusped hyperbolic 3-manifolds}.
These are manifolds which decompose into a compact 3-manifold with $n$ tori, $T^2$, boundary components and $n$ 3-manifolds each homeomorphic to $T^2 \times [1, \infty)$. 
In particular, we look at covers of small-volume manifolds and knot and link complements which have a hyperbolic structure.
While we focus on cusped hyperbolic manifolds, using the work of~\cite{Goerner2019VerifiedCF}, we believe that computations could be extended to include closed manifolds.
Now, let us define problems related to totally geodesic surfaces in hyperbolic 3-manifolds.

\medskip
\textsc{Totally Geodesic Surface}:

\textsc{Input:} An oriented hyperbolic cusped 3-manifold $M$, given as an ideally triangulated 3-manifold, a lift $\rho$ of its holonomy representation, and coordinates of a normal surface $F$.

\textsc{Output:} \textsc{Yes}, if $F$ is isotopic to a totally geodesic surface in $M$, \textsc{No} otherwise.

\medskip

Next, we extend this decision problem to an enumeration version.

\medskip

\textsc{Enumerate Totally Geodesic Surfaces}:

\textsc{Input:} An oriented hyperbolic cusped 3-manifold $M$, given as an ideally triangulated 3-manifold, along with a lift $\rho$ of its holonomy representation.

\textsc{Output:} The complete list of all normal surfaces in $M$ that are isotopic to totally geodesic surfaces, each given as a vector of normal coordinates.

\medskip

The algorithm which solves \textsc{Enumerate Totally Geodesic Surfaces} terminates in a finite number of steps by the argument found in Section~\ref{Algorithm} above Algorithm~\ref{finding_totally_geodesic}.

In Section~\ref{Algorithm} we produce several algorithms to solve these problems.
These algorithms come from the realization that with available tools like Regina \cite{regina}, finding embedded surfaces in 3-manifolds is computationally tractable, along with the fact that determining whether a surface is totally geodesic reduces to linear algebraic conditions \cite{Schultens2014IntroductionT3}.
Algorithm~\ref{skeletal_embedding} gives an embedding of the fundamental group of the surface into the fundamental group of the 3-manifold.
Algorithm~\ref{totally_geodesic} solves \textsc{Totally Geodesic Surface} and Algorithm~\ref{finding_totally_geodesic} solves \textsc{Enumerate Totally Geodesic Surfaces}.
In Section~\ref{Computations}, we do extensive computations on 150,000+ manifolds to demonstrate our algorithm and find previously unknown manifolds that contain totally geodesic surfaces, as well as give evidence for Conjecture~\ref{MenReid}.
Section~\ref{FutureWork} discusses other possible classes of surfaces to apply our algorithms to in order to extend our work.
The Appendices contain background and supplementary material as well as the proofs for Proposition~\ref{orientable_homology}, Lemmas~\ref{tot_geo_essential}, \ref{fundamental_skeleton_surface}, \ref{fundamental_skeleton_manifold}, \ref{skeletal_embedding_lemma}, Theorem~\ref{totally_geodesic_theorem} and Lemmas~\ref{VolumeGenus}, \ref{finite_isotopy_lemma}. 
Information about where to find our code and how to run it is also presented in the Appendix~\ref{code_availability}.

\section{Background}\label{Background}

In this paper, we assume that all 3-manifolds are oriented, hyperbolic, and cusped, with an ideal triangulation satisfying the gluing equations that give the unique complete hyperbolic structure.
We will explain these terms in detail in this section.
All surfaces considered will be closed and embedded.

\subsection{Hyperbolic 3-manifolds}\label{hyperbolic_background}

First, we give a brief introduction to the general theory of hyperbolic 3-manifolds along with details on some of the more specific computational background.
A complete background is outside of the scope of this paper but readers can find much of the theory in \cite{Purcell2020HyperbolicKT} and \cite{BenedettiPetronio}.

We start by reviewing some of the fundamentals of hyperbolic space and hyperbolic manifolds. 
Recall that \textit{$n$-dimensional hyperbolic space} $\H^n$ is the unique simply connected space of constant curvature $-1$. 
It can be modeled as the $n$-dimensional upper half-space, $\{\Vec{x}\in\R^n:x_n >0\}$ with the Riemannian metric determined by $ds = \frac{dx_1^2 + \dots + dx_n^2}{x_n^2}$.
With this model of $\H^n$, we define the \textit{boundary} of $\H^n$ to be $\{\Vec{x}\in\R^n:x_n =0\}\cup \{\infty\}$.
Note that topologically the boundary is homeomorphic to $S^{n-1}$.
The orientation-preserving isometries of $\H^n$, $\text{Isom}^+(\H^n)$ correspond bijectively to conformal (angle-preserving) maps on the boundary.
In particular, for $n=3$, if we identify the embedded plane $\{\Vec{x}\in\R^3:x_n =0\}$ with $\C$, the boundary then becomes the Riemann sphere and the orientation-preserving isometries of $\H^3$ can be identified with the group of M\"obius transformations which is isomorphic to $\pslc$.
An orientable hyperbolic 3-manifold $M$ is then a quotient of 3-dimensional hyperbolic space $\H^3$ by a torsion-free discrete subgroup $\Gamma$ of $\pslc$. 
This manifold has $\H^3$ as its universal cover with the deck transformations corresponding to the elements of $\Gamma$.
We say that $M$ is \textit{finite-volume} if the volume of $M$ with respect to the metric inherited on $M$ from $\H^3$ is finite.
Every finite-volume hyperbolic 3-manifold can be decomposed uniquely into a \textit{compact core} and some collection of \textit{cusps}~\cite{HarrisScott1996Core}.
The compact core consists of a compact manifold whose boundary is a possibly empty disjoint union of tori. 
The cusps consist of disjoint unions of $T^2\times [1,\infty)$. 
The two parts are glued together along their boundary tori to form $M$. 

\begin{figure}
	\centering
	\includegraphics[scale = 0.4]{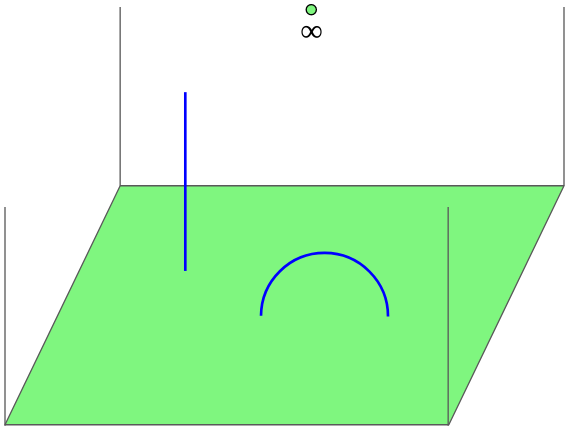}
	\caption{A model of $\mathbb{H}^3$. $\partial\mathbb{H}^3$ is given in green, while two examples of geodesics are given in blue.}
	\label{Dual2D}
\end{figure}

In order to work with manifolds concretely, it is often convenient to decompose them into simple pieces.
For cusped hyperbolic 3-manifolds, it is generally more convenient to use an \textit{ideal triangulation}.
Before that, we must introduce the concept of an ideal tetrahedron.
Topologically, an ideal tetrahedron is simply a tetrahedron with its vertices removed.
We can endow these with a hyperbolic geometric structure by taking an ideal tetrahedron to be the ``convex hull'' of any 4 distinct points in the boundary of $\H^3$.
The use of quotes here denotes the fact that we want to take the portion of that tetrahedron only in $\H^3$  and not on the boundary.
For every edge $e$ connecting the ``ideal'' vertices $a$ and $b$ of an ideal tetrahedron, we can find an orientation-preserving isometry which takes $a$ to $0$, and $b$ to $\infty$, and maps the remaining two ideal vertices to $1$ and some complex number $z$ lying in $\{x + iy\in\C: y \ge 0\}$. 
This number $z$ is called the \textit{shape parameter} of the edge of the tetrahedron.

\begin{figure}
	\centering
	\includegraphics[scale = 0.4]{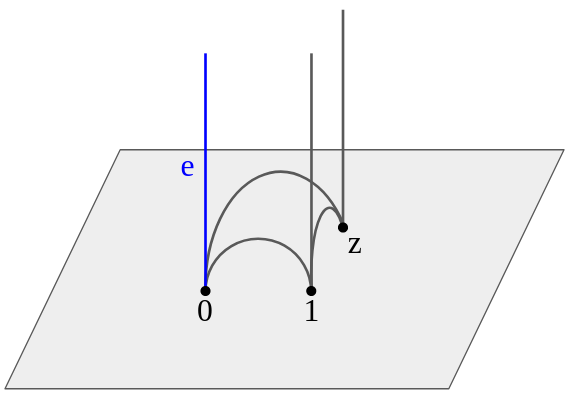}
	\caption{The shape parameter $z$ for the edge $e$ of the given ideal tetrahedron.}
	\label{Dual2D1}
\end{figure}

We mention here that for cusped hyperbolic manifolds $M$ we deal with throughout this paper, its ideal triangulation admits a partially flat angle structure as defined in~\cite{Lackenby2007AnAT}.

More information about how the shape parameters are used to build up the hyperbolic structure and the holonomy representation can be found in the Appendix~\ref{hyperbolic_background_appendix}.

\subsection{Totally Geodesic Surfaces}
A \textit{compressing disk} for a closed, connected, embedded surface $F$ in a 3-manifold $M$ is a disk $D \subset M$ such that $D \cap F = \partial D$ and $\partial D$ does not bound a disk in $F$.
A closed orientable surface is \textit{incompressible} if it does not have a compressing disk and is not a 2-sphere.
A closed orientable surface $F$ in $M$ is \textit{$\partial$-parallel} if $F$ is isotopic into $\partial M$.
A closed orientable surface is \textit{essential} if it is incompressible and is not $\partial$-parallel.

For a closed non-orientable surface $F$, we say that $F$ is \textit{incompressible} or \textit{essential} if the boundary of a regular neighborhood of $F$ has that property.
For a hyperbolic 3-manifold, an equivalent definition for incompressible is that the fundamental group of $F$, where $F$ is not a 2-sphere, injects into the fundamental group of $M$, i.e. $\pi_1(F) \hookrightarrow \pi_1(M)$ is injective.
We note that even for non-orientable $F$, $\pi_1(F) \hookrightarrow \pi_1(M)$ being injective implies that $F$ is incompressible.
Let $F$ be an embedded surface in $M$.
$F$ is said to be \textit{totally geodesic} if and only if every geodesic arc in $F$ with its induced Riemannian metric is also a geodesic in $M$.

Checking for totally geodesic surfaces is deeply related to the holonomy representation and we review the work of \cite{MenascoReid1992} for notions we use in our algorithms.
For a complete hyperbolic manifold $M$, there is a unique (up to conjugation) discrete, faithful representation $\rho:\pi_1(M)\to \pslc$ called the \textit{holonomy representation}.
This is the group of isometries used to construct the 3-manifold. It is often more convenient to work with a lift of the holonomy representation to $\slc$ which always exists by \cite{Culler_lifting86}.
For a surface $F$ embedded in a 3-manifold $M$, the inclusion map $i: F \to M$ induces a map $i_*: \pi_1(F)\to \pi_1(M)$ which composed with the holonomy representation, gives a representation from the fundamental group of $F$ to $\pslc$ (or $\slc$ when considering the lift of the holonomy representation).

The group of orientation-preserving isometries of $\H^3$ is $\pslc$.
Let $G \le \pslc$ be a discrete group and let $x \in \mathbb{H}^3$ be any point.
The \textit{limit set} of $G$ is the set of accumulation points on $\partial \mathbb{H}^3$ of the orbit $G(x)$.
A discrete subgroup of $\pslc$ is called a \textit{Kleinian group}.
A Kleinian group $\Gamma$ acting on the Riemann sphere $\C \cup \infty$ is \textit{quasi-Fuchsian} if its limit set is a Jordan curve.
$\Gamma$ is called \textit{Fuchsian} if the Jordan curve is a geometric circle.

A surface $F$ is called \textit{Fuchsian} if the induced representation $\rho(\pi_1 (F))$ is Fuchsian.
For an embedded non-orientable essential surface $F$ in $M$, if the induced representation $\rho(\pi_1 (F))$ is contained in the normalizer of $\pslr$ in $\pslc$ then $F$ is totally geodesic.
Note that for non-orientable surfaces, the property of being isotopic to a totally geodesic surface is equivalent to being Fuchsian.
This is not true for orientable surfaces since the double cover $\Tilde{F}$ of a non-orientable Fuchsian surface $F$ will also be Fuchsian, but $\Tilde{F}$ cannot be totally geodesic in $M$.
However, we have that if $F$ is Fuchsian, then either $F$ is isotopic to a totally geodesic surface or is a double cover of a non-orientable totally geodesic surface.
Moreover, there is a topological obstruction to the existence of essential non-orientable surfaces.

\begin{proposition}[Proposition 2.4 \cite{Dunfield_2022}]\label{orientable_homology}
    Suppose $M$ is a finite-volume orientable hyperbolic 3-manifold. Then every closed surface in $M$ is orientable if and only if the inclusion map $i:H_2(\partial M; \mathbb{F}_2)\to H_2(M;\mathbb{F}_2)$ is surjective.
\end{proposition}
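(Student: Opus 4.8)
I would first pass to the compact core $N\subset M$. Since $M$ is finite-volume hyperbolic, $N$ is a compact orientable $3$-manifold whose boundary is a disjoint union of tori, $M$ deformation retracts onto $N$ carrying the ends onto $\partial N$, and any closed surface in $M$ can be isotoped into $N$; hence the statement is equivalent to the corresponding statement for the pair $(N,\partial N)$, where I can use Poincar\'e--Lefschetz duality. All (co)homology below has $\mathbb{F}_{2}$ coefficients. Denote by $j^{*}\colon H^{1}(N,\partial N)\to H^{1}(N)$ and $r^{*}\colon H^{1}(N)\to H^{1}(\partial N)$ the maps in the cohomology long exact sequence of the pair, by $i_{*}\colon H_{2}(\partial N)\to H_{2}(N)$ and $q\colon H_{2}(N)\to H_{2}(N,\partial N)$ the corresponding maps in homology, and by $\mathrm{PD}$ the Lefschetz isomorphisms $H_{2}(N)\cong H^{1}(N,\partial N)$ and $H^{1}(N)\cong H_{2}(N,\partial N)$. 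The one geometric input I would use is that a closed surface $F$ embedded in the orientable manifold $N$ is orientable exactly when it is two-sided, i.e.\ when its normal line bundle $\nu_{F}$ has $w_{1}(\nu_{F})=w_{1}(TF)=0$, and that by the tubular-neighborhood (Thom class) description of the Lefschetz dual, $w_{1}(\nu_{F})$ equals the restriction of $\mathrm{PD}[F]\in H^{1}(N,\partial N)$ to $F$, a restriction which factors through $j^{*}$.

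The direction ``$i_{*}$ surjective $\Rightarrow$ every closed surface orientable'' I would prove by contraposition. Let $F\subset N$ be closed and non-orientable. Then $[F]\in H_{2}(N)$ is defined and $w_{1}(\nu_{F})\neq 0$, so $\mathrm{PD}[F]$ restricts nontrivially to $F$; since this restriction factors through $j^{*}$, we get $j^{*}(\mathrm{PD}[F])\neq 0$ in $H^{1}(N)$. Under $\mathrm{PD}$ the map $j^{*}$ corresponds to $q$ and $\mathrm{PD}[F]$ to $[F]$, so $q[F]\neq 0$; by exactness of $H_{2}(\partial N)\to H_{2}(N)\to H_{2}(N,\partial N)$ (the maps being $i_{*}$ and $q$) this gives $[F]\notin\mathrm{im}(i_{*})$, so $i_{*}$ is not surjective. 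Note this half uses only that $M$ is orientable with toroidal ends.

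For the converse I would run the duality backwards: $i_{*}$ is not surjective iff $q\neq 0$ iff $j^{*}\neq 0$ iff $\ker r^{*}\neq 0$, the last equivalence from exactness of $H^{1}(N,\partial N)\to H^{1}(N)\to H^{1}(\partial N)$. Pick $0\neq\alpha\in\ker r^{*}$ and a properly embedded surface $S$ with $[S]=\mathrm{PD}(\alpha)\in H_{2}(N,\partial N)$. Since $\alpha$ restricts trivially to $\partial N$, the multicurve $\partial S$ is null-homologous in $H_{1}(\partial N)$ and so bounds a subsurface $S'$ of $\partial N$; capping $S$ off with (a pushed-in copy of) $S'$ yields a closed surface $\widehat{\Sigma}=S\cup_{\partial S}S'$ with $q[\widehat{\Sigma}]=[S]\neq 0$, hence $[\widehat{\Sigma}]\notin\mathrm{im}(i_{*})$. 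Because $S'$, a subsurface of tori, is orientable and gluing on an orientable piece along its entire boundary cannot change one-sidedness, $\widehat{\Sigma}$ is non-orientable if and only if $S$ is. So the converse is finished the moment $\mathrm{PD}(\alpha)$ is known to have a one-sided representative --- which is automatic whenever $\alpha$ is \emph{not} the $\mathbb{F}_{2}$-reduction of an integral class (any orientable surface carries an integral fundamental class), i.e.\ whenever the Bockstein $\beta(\alpha)\in H^{2}(N;\mathbb{Z})$ is nonzero.

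The main obstacle is the remaining case, in which every element of $\ker r^{*}$ lifts to $H^{1}(N;\mathbb{Z})$ and the construction above only produces \emph{orientable} surfaces; this is exactly what happens for the Seifert-fibered manifolds $(\Sigma_{g}\setminus D^{2})\times S^{1}$ with $g\geq 1$, where $i_{*}$ fails to be surjective and yet there is \emph{no} closed non-orientable surface at all, so a purely homological argument cannot close the converse and the hyperbolicity of $M$ (at least the absence of essential tori and Seifert pieces) must enter here. My plan for this case would be to produce the one-sided surface directly rather than as a Lefschetz dual: a nonzero $\alpha\in\ker r^{*}$ names a connected double cover $\widehat N\to N$ that is trivial over every cusp, and a closed non-orientable surface in $N$ is the same datum as a two-sided, deck-invariant surface in $\widehat N$ on which the deck involution acts by an orientation-reversing free map; I would try to build such a surface from an incompressible (e.g.\ normal, or Thurston-norm-minimizing) representative of the class dual to $\alpha$, using irreducibility and atoroidality to rule out the orientable alternatives --- for instance an essential torus appearing as the boundary of a regular neighborhood of a hypothetical Klein-bottle piece. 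I expect this step --- that under hyperbolicity a class $z$ with $q(z)\neq 0$ genuinely forces a closed one-sided surface --- to be the technical heart of the proof, everything else being the duality bookkeeping above; already the orientable twisted $I$-bundle over the Klein bottle shows the naive construction can miss an existing non-orientable surface (there $\ker r^{*}\subseteq\ker\beta$), which is why a direct argument is needed.
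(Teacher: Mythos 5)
Your argument for the implication ``$i_*$ surjective $\Rightarrow$ every closed surface in $M$ is orientable'' is complete and correct: passing to the compact core, identifying $w_1(\nu_F)$ with the restriction to $F$ of the Lefschetz dual of $[F]$, and then chasing the exact sequence $H_2(\partial N;\mathbb{F}_2)\to H_2(N;\mathbb{F}_2)\to H_2(N,\partial N;\mathbb{F}_2)$ is exactly the right bookkeeping, and your remark that this half needs only orientability and toroidal ends is accurate. Note that this is also the only direction the paper actually uses (the proposition is invoked purely as an obstruction to non-orientable surfaces), and that the paper gives no proof of its own beyond citing Proposition~2.4 of \cite{Dunfield_2022} and observing, as you do, that one reduces to the compact core. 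So there is no argument in the paper against which your details can be checked; the comparison is with the cited source.

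As a standalone proof, however, your proposal has a genuine gap, and you have located it yourself: the reverse implication --- $i_*$ not surjective $\Rightarrow$ $M$ contains a closed non-orientable surface --- is never established. Your duality argument only shows that some class $z$ with $q(z)\neq 0$ exists and is represented by an embedded closed surface; when $z$ is the reduction of an integral class that representative may well be two-sided, and your own example $(\Sigma_g\setminus D^2)\times S^1$ shows the implication is simply false without a hypothesis beyond homology, so hyperbolicity must do real work here. The paragraph meant to close this case --- passing to the double cover named by $\alpha$ and using atoroidality to force a free, orientation-reversing, deck-invariant surface --- is a plan rather than an argument: nothing in it explains why the involution cannot preserve orientation on, or fail to act freely on, every invariant representative, and this is precisely the content of the cited proposition. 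To complete the proof you would need to supply the statement: for $M$ finite-volume hyperbolic and $z\in H_2(M;\mathbb{F}_2)$ not in the image of $i_*$, some embedded closed surface in $M$ is one-sided. As written, your proposal establishes one implication in full and gives a correct reduction of the other to that missing step.
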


Being totally geodesic implies Fuchsian, giving a necessary condition for a surface to be totally geodesic.
For totally geodesic surfaces, we also have the following well-known lemma.

\begin{lemma}\label{tot_geo_essential}
    A totally geodesic surface in a hyperbolic 3-manifold is essential.
\end{lemma}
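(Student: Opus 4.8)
The plan is to verify, one at a time, the three conditions in the definition of an essential surface — that $F$ has no compressing disk, that $F$ is not a $2$-sphere, and that $F$ is not $\partial$-parallel — using the single fact that a closed totally geodesic surface inherits a hyperbolic structure. First I would record the elementary observation that the metric induced on $F$ from $M$ has constant curvature $-1$, and since $F$ is closed it is complete, so $F$ is a closed hyperbolic surface. By Gauss--Bonnet, $2\pi\chi(F) = -\area(F) < 0$, so $\chi(F) < 0$; in particular $F$ is not a $2$-sphere (nor a torus, Klein bottle, or projective plane). This disposes of one condition outright and supplies the key input for non-$\partial$-parallelism.

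Next, for incompressibility I would pass to the universal cover $\H^3$ of $M$ with deck group $\Gamma = \rho(\pi_1(M))$. Fix a basepoint $x_0 \in F$ and a lift $\widetilde{x}_0 \in \H^3$, and let $P$ be the component of the preimage of $F$ containing $\widetilde{x}_0$. Because a totally geodesic submanifold lifts to a totally geodesic submanifold of the universal cover, and the components of the preimage are complete (they cover the closed surface $F$), $P$ is a totally geodesic hyperbolic plane, i.e. a copy of $\H^2$. Being simply connected, the restriction of $\H^3 \to M$ to $P$ is the universal covering of $F$; its group of deck transformations is the setwise stabilizer $G_P = \{\gamma \in \Gamma : \gamma(P) = P\} \le \Gamma$, and the usual identification of this deck group with $\pi_1(F)$ is exactly the composite $\rho \circ i_*$. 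Hence $\rho \circ i_*$ is, up to conjugacy, the holonomy representation of the closed hyperbolic surface $F$, which is faithful; since $\rho$ is faithful, $i_* \colon \pi_1(F) \to \pi_1(M)$ is injective. By the characterization of incompressibility recalled in Section~\ref{Background}, this shows $F$ is incompressible — directly when $F$ is orientable, and via the stated implication ($\pi_1$-injectivity implies incompressibility) when $F$ is non-orientable.

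Finally, for non-$\partial$-parallelism I would use that the cusps of $M$ have the form $T^2 \times [1,\infty)$, so $\partial M$ is a (possibly empty) disjoint union of tori. A closed connected surface isotopic into $\partial M$ must be isotopic onto a union of components of $\partial M$, hence homeomorphic to a torus, contradicting $\chi(F) < 0$ (and if $M$ is closed there is nothing to check). For orientable $F$ this completes the proof. For non-orientable $F$ one applies the same reasoning to the orientation double cover $\partial N(F)$: it satisfies $\chi(\partial N(F)) = 2\chi(F) < 0$, so it is not a torus and hence not $\partial$-parallel, and $\pi_1(\partial N(F))$ embeds in $\pi_1(F) \hookrightarrow \pi_1(M)$, so it is incompressible; thus $\partial N(F)$ is essential, which by definition means $F$ is essential. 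The only step that requires genuine care is the identification of $\rho \circ i_*$ with the holonomy of $F$, which rests on the fact that a totally geodesic surface lifts to a totally geodesic plane in $\H^3$; everything else is Gauss--Bonnet, the product structure of the cusps, and covering-space bookkeeping.
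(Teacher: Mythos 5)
Your proof is correct, and it is more thorough than the proof given in the paper, which addresses only $\pi_1$-injectivity and does so loop-by-loop: a nontrivial class in $\pi_1(F)$ is represented by a closed geodesic of the hyperbolic surface $F$, which by total geodesy is also a closed geodesic of $M$; its lift to $\H^3$ is an arc of a complete geodesic, so the corresponding deck transformation is a nontrivial isometry, giving injectivity of $i_*$. Your argument for incompressibility is instead global: you lift the whole surface to a totally geodesic plane $P\cong\H^2$, observe that $P\to F$ is the universal cover, and identify the deck group with the image of $\rho\circ i_*$, which you then recognize as the faithful holonomy representation of the closed hyperbolic surface $F$. Both routes rest on lifting to $\H^3$, but the paper's version is a bit more elementary (one curve at a time), while yours packages $\rho(\pi_1(F))$ directly as a Fuchsian group acting cocompactly on $P$, which is structurally cleaner and yields discreteness for free. (One small redundancy: once $\rho\circ i_*$ is faithful, $i_*$ is injective automatically; you don't separately need that $\rho$ is faithful.) You also explicitly dispatch the remaining parts of the definition of essential --- $F$ is not a sphere (Gauss--Bonnet) and not $\partial$-parallel (since $\partial M$ is a union of tori while $\chi(F)<0$), with the careful reduction to $\partial N(F)$ in the non-orientable case --- whereas the paper's written proof elides those pieces.
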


\begin{proposition}[\cite{Purcell2020HyperbolicKT} Example 12.11]\label{Conjugate}
    Let $M$ be a complete hyperbolic 3-manifold with discrete, faithful representation $\rho:\pi_1(M) \to \pslc$ and $F$ a closed essential surface in $M$. If $F$ is orientable, then $F$ is Fuchsian if and only if $\rho(\pi_1 (F))$ can be conjugated into $\pslr \leq \pslc$. If $F$ is non-orientable, then $F$ is Fuchsian if and only if $\rho(\pi_1 (F))$ can be conjugated into the normalizer of $\pslr \leq \pslc$.
\end{proposition}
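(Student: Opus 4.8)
The plan is to translate the condition ``$F$ is Fuchsian'' into ``$\Gamma_F := \rho(i_*\pi_1(F))$ stabilises a round circle at infinity,'' and then to exploit that an essential closed surface in a cusped hyperbolic $3$-manifold has non-free fundamental group, which forces any action of (a finite-index subgroup of) $\Gamma_F$ on a hyperbolic plane to be cocompact. First I would record the relevant algebra of $N := N_{\pslc}(\pslr)$. Since $\pslr$ has the geodesic plane $P\subseteq\H^3$ with $\partial_\infty P = \R\cup\{\infty\}$ as its \emph{unique} invariant plane, an element of $\pslc$ normalises $\pslr$ if and only if it preserves $P$ setwise; thus $N = \mathrm{Stab}_{\pslc}(P) = \mathrm{Stab}_{\pslc}(\R\cup\{\infty\})$, and for an arbitrary round circle $C$ the stabiliser $\mathrm{Stab}_{\pslc}(C)$ is a $\pslc$-conjugate of $N$. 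Moreover $\pslr$ has index $2$ in $N$, with quotient character $\chi\colon N\to\Z/2\Z$, where $\chi(g)=0$ exactly when $g$ preserves each of the two half-spaces bounded by $P$; equivalently $\chi(g)=0$ iff $g$ restricts to an orientation-preserving isometry of $P\cong\H^2$ (an orientation-preserving isometry of $\H^3$ preserving $P$ that swaps the two sides of $P$ must reverse the orientation of $P$). I would also note at the outset that, $F$ being essential, $i_*\colon\pi_1(F)\hookrightarrow\pi_1(M)$ is injective and $\rho$ is faithful, so $\Gamma_F\cong\pi_1(F)$ is a torsion-free, non-elementary, discrete subgroup of $\pslc$; and that no sphere, projective plane, torus, or Klein bottle is essential in a cusped hyperbolic $3$-manifold, so $\pi_1(F)$ is a closed surface group with $\chi(F)\le -1$, hence not free, as are all of its finite-index subgroups.

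For ``Fuchsian $\Rightarrow$ conjugate into $\pslr$ (resp.\ $N$)'': if $F$ is Fuchsian then $\Lambda(\Gamma_F)$ is a round circle $C$, and since the limit set is $\Gamma_F$-invariant, $\Gamma_F\subseteq\mathrm{Stab}_{\pslc}(C)$, a conjugate of $N$; this already settles the non-orientable case. For orientable $F$ I must upgrade ``conjugate into $N$'' to ``conjugate into $\pslr$''. After conjugating so that $\Gamma_F\subseteq N$, suppose for contradiction that $\chi|_{\Gamma_F}$ is onto. Then $\Gamma_F$ acts freely and properly discontinuously on $P\cong\H^2$ with some elements acting as orientation-reversing isometries, so $S := P/\Gamma_F$ is a non-orientable surface with $\pi_1(S)\cong\pi_1(F)$; furthermore $S$ is closed, because $\pi_1(S)$ is not free while the fundamental group of a non-compact surface, or of a compact surface with non-empty boundary, is free. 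But then $H_1(F;\Z)\cong H_1(S;\Z)$ has $2$-torsion, contradicting orientability of $F$. Hence $\chi|_{\Gamma_F}=0$ and $\Gamma_F\subseteq\pslr$.

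For the converse, suppose $\Gamma_F$ is conjugated into $\pslr$ (orientable case) or into $N$ (non-orientable case); conjugate so this holds literally and set $\Gamma_F^+ := \Gamma_F\cap\pslr$, of index $1$ or $2$ in $\Gamma_F$. Then $\Gamma_F^+$ is a torsion-free discrete subgroup of $\pslr=\mathrm{Isom}^+(P)$, and the quotient $S^+ := P/\Gamma_F^+$ has $\pi_1(S^+)\cong\Gamma_F^+$, a finite-index subgroup of $\pi_1(F)$, hence a closed surface group that is not free; as above this forces $S^+$ to be closed, so $\Gamma_F^+$ is cocompact in $\pslr$ and $\Lambda(\Gamma_F^+) = \partial_\infty P$, a round circle. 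Since $\Gamma_F^+$ has finite index in $\Gamma_F$, the two share a limit set, so $\Lambda(\Gamma_F)$ is a round circle and $F$ is Fuchsian.

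The step I expect to be the crux is the cocompactness input in the last two paragraphs: deducing from ``$\pi_1$ of the quotient surface is a closed surface group, hence not free'' that the quotient is genuinely closed and the limit set is the \emph{entire} boundary circle rather than a proper arc or a Cantor subset. This rests on the classification of surfaces together with the freeness of fundamental groups of non-closed surfaces, and on the preliminary observation that the essential hypothesis really does make $\pi_1(F)$ non-free. The identification $N_{\pslc}(\pslr)=\mathrm{Stab}_{\pslc}(P)$ and the reading of the $\Z/2\Z$ quotient as the ``side-swapping'' ($=$ $P$-orientation) character is elementary but should be stated carefully, since the whole orientable/non-orientable dichotomy turns on it.
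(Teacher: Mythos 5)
The paper does not actually supply a proof of this proposition; it is cited directly from Purcell's book (Example 12.11), so there is no internal argument to compare yours against. Your proof, read on its own terms, is correct and self-contained, and the structure is sound. The identification $N_{\pslc}(\pslr)=\mathrm{Stab}_{\pslc}(P)$ via the uniqueness of the $\pslr$-invariant plane is right, as is the reading of the index-two character $\chi$ as the ``side-swapping'' (equivalently, $P$-orientation-reversing) obstruction. The forward direction follows cleanly from $\Gamma_F$-invariance of the limit set plus, in the orientable case, the $H_1$-torsion contradiction showing $\chi|_{\Gamma_F}$ cannot be onto. The converse correctly passes to $\Gamma_F^+ := \Gamma_F\cap\pslr$ and uses that a finite-index subgroup of a closed surface group is a closed surface group, hence not free, hence the quotient $P/\Gamma_F^+$ is a closed hyperbolic surface, which forces cocompactness and $\Lambda(\Gamma_F^+)=\partial_\infty P$; since limit sets are stable under passing to finite-index subgroups, $\Lambda(\Gamma_F)$ is then a round circle.

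The one place where you lean on something implicit and should make sure you are comfortable is the preliminary claim that no sphere, projective plane, torus, or Klein bottle is essential in a cusped hyperbolic $3$-manifold: the sphere and $\R P^2$ cases use irreducibility and torsion-freeness of $\pi_1(M)$, and the torus and Klein bottle cases use atoroidality (any $\Z^2$ must be peripheral, so a closed incompressible torus is $\partial$-parallel, and a Klein bottle would give an essential torus via its orientation double). Without this, the proposition would in fact be false as stated for $\chi(F)=0$ (a peripheral torus has parabolic image with point limit set, not a circle), so the essential hypothesis is genuinely load-bearing exactly where you invoke it; you flag the right crux.
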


The above also holds for $\slc$. Since it is more convenient to work with matrices compared to projective matrices, we will deal with a lift of the holonomy representation into $\slc$ throughout this paper. Checking whether or not an $\slc$ representation conjugates into $\slr$ is difficult, so we reduce it to something simpler. 
\begin{lemma}[\cite{MorganShalen_part1} Proposition III.1.1]
    For an irreducible representation $\phi$ of the fundamental group $G$ of a surface into $\slc$ such that for all $g\in G$, $\tr(\phi(g))\in \R$, $\phi$ is either conjugate into $SU(2)$ or $\slr$.
\end{lemma}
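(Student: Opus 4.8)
The plan is to identify the real subalgebra of $M_2(\C)$ generated by $\phi(G)$ as a quaternion algebra over $\R$, and then to use that there are, up to isomorphism, only two such algebras --- $M_2(\R)$ and the Hamilton quaternion algebra $\H$ --- which will account for the two alternatives in the statement. (The hypothesis that $G$ is a surface group plays no role; all that is used is that $\phi$ is an irreducible $\slc$-representation of a group with $\tr\phi(g)\in\R$ for all $g$.) To begin, since $\phi$ is irreducible, Burnside's theorem tells us that the $\C$-subalgebra of $M_2(\C)$ generated by $\phi(G)$ is all of $M_2(\C)$. Let $A\subseteq M_2(\C)$ be the $\R$-linear span of $\phi(G)$; since $\phi(g)\phi(h)=\phi(gh)$, the set $A$ is a unital $\R$-subalgebra, and by Burnside it spans $M_2(\C)$ over $\C$. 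Writing elements of $A$ as real combinations of elements of $\phi(G)$ and using $\tr(\phi(g)\phi(h))=\tr(\phi(gh))\in\R$ together with $\R$-bilinearity of the trace form, we also obtain $\tr(XY)\in\R$ for all $X,Y\in A$.

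The heart of the argument is to show $\dim_{\R}A=4$. For this I would use that the trace form $(X,Y)\mapsto\tr(XY)$ on $M_2(\C)$ is non-degenerate, so one can choose a $\C$-basis $e_1,\dots,e_4$ of $M_2(\C)$ lying inside $A$; its Gram matrix $\big(\tr(e_ie_j)\big)_{i,j}$ then has real entries and is invertible. For an arbitrary $a=\sum_i c_ie_i\in A$ with $c_i\in\C$, the conditions $\tr(ae_j)\in\R$ for $1\le j\le 4$ form an invertible $\R$-linear system in the $c_i$ with real right-hand side, forcing every $c_i\in\R$; hence $A$ is the $\R$-span of $e_1,\dots,e_4$ and $\dim_{\R}A=4$. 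It follows that the multiplication map $A\otimes_{\R}\C\to M_2(\C)$ is a surjection of $\C$-algebras of equal dimension, hence an isomorphism; therefore $A$ is a central simple $\R$-algebra with $A\otimes_{\R}\C\cong M_2(\C)$, i.e.\ a quaternion algebra over $\R$. By the classification of quaternion algebras over $\R$, either $A\cong M_2(\R)$ or $A\cong\H$.

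It then remains to transfer this back to $\phi$. In the first case fix the standard inclusion $M_2(\R)\hookrightarrow M_2(\C)$; in the second, fix the standard embedding of $\H$ into $M_2(\C)$, chosen so that its image meets $\slc$ exactly in $SU(2)$. Composing an isomorphism $A\cong M_2(\R)$ (respectively $A\cong\H$) with this embedding yields an $\R$-algebra homomorphism $A\to M_2(\C)$; this homomorphism and the inclusion $A\hookrightarrow M_2(\C)$ extend by scalars to $\C$-algebra homomorphisms $A\otimes_{\R}\C\to M_2(\C)$, so by the Skolem--Noether theorem they differ by conjugation by some $P\in GL_2(\C)$. Consequently $P\phi(G)P^{-1}$ lies in $M_2(\R)$ (respectively in the fixed copy of $\H$); and since $\det\big(P\phi(g)P^{-1}\big)=\det\phi(g)=1$ for every $g\in G$, this shows $P\phi(G)P^{-1}\subseteq\slr$ in the first case and $P\phi(G)P^{-1}\subseteq SU(2)$ in the second, as desired.

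I expect the main obstacle to be the dimension count in the second paragraph --- showing that the ``real form'' $A$ is exactly four-dimensional, so that it genuinely is a quaternion algebra and not something larger. This is precisely where the hypotheses enter essentially: irreducibility (through Burnside) is what forces $A$ to span $M_2(\C)$, and the real-trace condition (through non-degeneracy of the trace form) is what caps $\dim_{\R}A$ at $4$; without irreducibility the statement fails, for instance for a suitable reducible representation with all traces equal to $2$. An alternative route would be to observe that the complex-conjugate representation $\overline{\phi}$ has the same character as $\phi$, hence is conjugate to it, and then run a Galois-cohomology (Hilbert 90) argument to descend to $\R$; but the quaternion-algebra approach makes the $SU(2)$-versus-$\slr$ dichotomy most transparent. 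The points I would not dwell on are routine: checking that the standard embedding $\H\hookrightarrow M_2(\C)$ has its norm-one elements equal to $SU(2)$, and the bookkeeping in the application of Skolem--Noether.
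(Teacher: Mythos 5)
The paper does not contain a proof of this lemma: it is a black-box citation to Morgan--Shalen, Proposition III.1.1, and is not among the results reproved in the appendix, so there is no internal argument to compare against. Your proposal is correct and complete as written. The chain of reasoning --- Burnside forces the $\C$-span of $\phi(G)$ to be all of $M_2(\C)$; the real-trace hypothesis together with non-degeneracy of the trace form pins $\dim_\R A$ to exactly $4$; scalar extension $A\otimes_\R\C\cong M_2(\C)$ identifies $A$ as a quaternion algebra over $\R$; classification of such algebras gives $M_2(\R)$ or the Hamilton quaternions; and Skolem--Noether supplies the conjugating $P\in GL_2(\C)$ --- is sound at every step, and the determinant observation correctly sorts the two cases into $\slr$ and $SU(2)$. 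You are also right that the surface-group hypothesis is superfluous (all that is used is irreducibility and real traces) and that the alternative route via $\overline{\phi}\cong\phi$ and a Hilbert~90 descent would work; the quaternion-algebra argument you chose has the advantage of making the $SU(2)$-versus-$\slr$ dichotomy immediate from the Brauer group of $\R$. One point worth making explicit, though it is not a gap: when invoking Skolem--Noether you need \emph{both} scalar-extended maps $A\otimes_\R\C\to M_2(\C)$ to be isomorphisms of $\C$-algebras; you verify this for the inclusion, and the same dimension count applies to the composite through $M_2(\R)$ or the quaternions since its image already $\C$-spans $M_2(\C)$.
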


If $F$ is a closed essential surface, then $\rho$ restricted to $\pi_1(F)$ is an irreducible representation.
Elements of $SU(2)$ as isometries of $\H^3$ all have fixed points so cannot represent elements of the fundamental group of $M$ and therefore cannot represent elements of the fundamental group of $F$. 
Hence, if $\rho$ restricted to $\pi_1(F)$ has all real traces, then it must be conjugate into $\slr$ and thus $F$ is totally geodesic or is a double cover of a totally geodesic surface.

The following lemma helps to simplify things further.
\begin{lemma}[\cite{MaclachlanReid} Lemma 3.5.3]\label{generator_G}
    For a subgroup $G$ of $\slc$ generated by $\{g_1,\dots,g_n\}$, the smallest field containing all of the traces of elements of $G$ is generated by the traces of the elements in $\{g_i, g_ig_j, g_ig_jg_k: 1 \le i\le j\le k \le n \}$. 
\end{lemma}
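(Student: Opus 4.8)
One inclusion is immediate: every matrix in the given list lies in $G$, so its trace lies in the trace field of $G$, and therefore the subfield $F$ that these traces generate is contained in the trace field. For the reverse inclusion the plan is to show that $\tr(w)\in F$ for every $w\in G$, written as a word in $g_1^{\pm 1},\dots,g_n^{\pm 1}$.

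Everything will run on the Cayley--Hamilton relation in $\slc$, namely $g^2=\tr(g)\,g-I$, equivalently $g+g^{-1}=\tr(g)\,I$. This yields $\tr(g^{-1})=\tr(g)$ and $\tr(gh)+\tr(gh^{-1})=\tr(g)\tr(h)$, which together with cyclic invariance $\tr(gh)=\tr(hg)$ let me strip off inverse letters and cyclically permute, reducing the problem to positive words $w=g_{i_1}\cdots g_{i_m}$; I then induct on the length $m$. The case $m\le 3$ is handled by cyclic invariance together with the symmetrization identity
\[
\tr(ABC)+\tr(ACB)=\tr(A)\tr(BC)+\tr(B)\tr(CA)+\tr(C)\tr(AB)-\tr(A)\tr(B)\tr(C),
\]
which rewrites $\tr(g_{i_1}\cdots g_{i_m})$ in terms of the listed generators with the indices in nondecreasing order.

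The substance is the case $m\ge 4$, and my main tool will be the linear algebra of the algebra $M_2(\C)$. Suppose first that some pair of the generators, say $A=g_i$ and $B=g_j$, generates an irreducible subgroup (equivalently $\tr(ABA^{-1}B^{-1})\ne 2$). Then $\{I,A,B,AB\}$ is a $\C$-basis of $M_2(\C)$; a finite Cayley--Hamilton computation (e.g.\ $A\cdot AB=\tr(A)AB-B$ and $B\cdot A=(\tr(AB)-\tr(A)\tr(B))I+\tr(B)A+\tr(A)B-AB$) shows that all products of basis vectors re-expand in this basis with coefficients that are polynomials with integer coefficients in $\tr(A),\tr(B),\tr(AB)$; and the nondegenerate trace form $\langle X,Y\rangle=\tr(XY)$ has, on this basis, a Gram matrix with entries again polynomial in $\tr(A),\tr(B),\tr(AB)$ and with nonzero determinant. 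Solving the associated linear system expresses each generator as $g_k=\alpha_k I+\beta_k A+\gamma_k B+\delta_k AB$, where $\alpha_k,\beta_k,\gamma_k,\delta_k$ lie in the field generated by $\tr(A),\tr(B),\tr(AB)$ together with $\tr(g_k),\tr(g_ig_k),\tr(g_jg_k),\tr(g_ig_jg_k)$ --- all traces of words of length $\le 3$, hence in $F$. Substituting these formulas (and $g_k^{-1}=\tr(g_k)I-g_k$) into an arbitrary word $w$ and multiplying out using the integer-polynomial structure constants gives $w=\alpha I+\beta A+\gamma B+\delta AB$ with $\alpha,\beta,\gamma,\delta\in F$, so $\tr(w)=2\alpha+\beta\tr(A)+\gamma\tr(B)+\delta\tr(AB)\in F$, closing the induction in this case.

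The step I expect to be the main obstacle is the remaining case, in which no pair of the given generators generates an irreducible group --- and this can genuinely happen even when $G$ itself is irreducible (for instance, three non-central elements whose eigenline pairs are $\{e,f\}$, $\{e,h\}$, $\{f,h\}$ for three distinct lines $e,f,h$). One promising line is a parity argument: expanding $g_{i_1}g_{i_2}$ by Cayley--Hamilton shows that $\tr(g_{i_1}g_{i_2}V)+\tr(g_{i_2}g_{i_1}V)$ lies in the field generated by traces of words strictly shorter than $w$, and, with cyclic invariance, this upgrades to $\tr(\sigma\cdot w)\equiv\mathrm{sgn}(\sigma)\,\tr(w)$ modulo traces of shorter words for every $\sigma\in S_m$; taking $\sigma$ to be a cyclic rotation then disposes of every word of even length, and taking $\sigma$ to swap two occurrences of the same generator disposes of every word with a repeated letter. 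What remains is the square-free positive words of odd length $\ge 5$, and for these I would either push the $M_2(\C)$-basis argument through with a carefully chosen longer pair $A,B$ (checking that the finitely many length-$\ge 4$ traces that appear collapse by Cayley--Hamilton because they contain a repeated generator), or else reduce to the case where $G$ is reducible, in which $\tr(w)=\mu(w)+\mu(w)^{-1}$ for a character $\mu\colon G\to\C^{\times}$ factoring through the abelianization and the statement becomes an elementary computation with monomials. Making this last case go through cleanly and uniformly is where the real work lies.
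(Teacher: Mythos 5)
The paper does not give its own proof of Lemma~\ref{generator_G}; it is cited directly from Maclachlan--Reid, so I can only assess your argument on its own terms. Your reduction to positive words, the $m\le 3$ case, and the case in which some pair of the generators $g_i,g_j$ generates an irreducible subgroup (so $\{I,g_i,g_j,g_ig_j\}$ is a basis of $M_2(\C)$ with structure constants and a nondegenerate trace pairing expressed in the length-$\le 3$ traces) are all correct. The gap is exactly where you flag it: in the residual case (no irreducible generator pair) your parity argument disposes of positive words of even length and positive words with a repeated letter, but leaves square-free positive words of odd length $m\ge 5$ untouched, and neither of your two proposed repairs closes it. The ``carefully chosen longer pair $A,B$'' idea is not accompanied by a reason such a pair must exist, nor a control on the new long traces it introduces, and ``reduce to the case where $G$ is reducible'' is not available --- as your own triangle example shows, $G$ can be irreducible while every pair of generators is reducible, so this residual case is not covered by the reducible analysis.

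The missing ingredient is that the identity underlying your parity rule, namely $\tr(UPQV)+\tr(UQPV)=\tr(P)\tr(UQV)+\tr(Q)\tr(UPV)+(\tr(PQ)-\tr(P)\tr(Q))\tr(UV)$, is valid for arbitrary $SL_2$ elements $P,Q$, not just single generators. Applying it with blocks $P=g_{i_2}g_{i_3}$, $Q=g_{i_4}\cdots g_{i_m}$ (and $U=g_{i_1}$, $V$ empty) gives $\tr(w)+\tr(w')$ in terms of traces of strictly shorter positive words, where $w'$ is obtained from $w$ by transposing the blocks $P$ and $Q$. But as a letter permutation this block transposition has sign $(-1)^{|P|\cdot|Q|}=(-1)^{2(m-3)}=+1$, so your adjacent-transposition rule gives $\tr(w')\equiv+\tr(w)$ modulo shorter traces. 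Comparing the two congruences yields $2\tr(w)$ expressed in traces of strictly shorter words, each of which (being of even length $m-1$, of odd length $m-2<m$, or of length $\le 3$) lies in $F$ by strong induction; hence $\tr(w)\in F$. With this block identity supplied, the parity argument alone closes the entire reverse inclusion and the irreducible-pair case is not even needed.
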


From the above, we get the following corollary.
\begin{corollary}\label{trace_generators}
    Let $M$ be a hyperbolic 3-manifold with discrete, faithful representation $\rho:\pi_1(M) \to \pslc$ and $F$ an essential orientable surface in $M$.
    Let $\{g_1,\dots,g_n\}$ be a set of generators of the subgroup $\rho(\pi_1 (F))$.
    Then $F$ is Fuchsian if and only if the traces of all the elements in the set $\{g_i, g_ig_j, g_ig_jg_k: 1 \le i\le j\le k \le n \}$ are real.
\end{corollary}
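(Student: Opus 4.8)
The plan is to assemble the corollary from the three results immediately preceding it in the excerpt — Proposition~\ref{Conjugate}, the Morgan--Shalen dichotomy, and Lemma~\ref{generator_G} (Maclachlan--Reid) — together with the already-noted facts that $\rho$ restricted to $\pi_1(F)$ is irreducible (since $F$ is essential) and that the holonomy of a hyperbolic $3$-manifold is torsion-free. Throughout I work with the fixed lift of $\rho$ to $\slc$ and write $G = \rho(\pi_1(F)) \le \slc$, generated by $\{g_1,\dots,g_n\}$.

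For the forward implication, I would start from the assumption that $F$ is Fuchsian. By Proposition~\ref{Conjugate} (in its $\slc$ formulation, as remarked after the proposition), $G$ can be conjugated into $\slr$. Since trace is a conjugacy invariant and every element of $\slr$ has real trace, every element of $G$ has real trace; in particular all of the finitely many traces of elements of $\{g_i, g_ig_j, g_ig_jg_k : 1 \le i \le j \le k \le n\}$ are real. This direction is immediate and needs no further input.

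For the converse, suppose these traces are all real. The key step is Lemma~\ref{generator_G}: the smallest subfield of $\C$ containing all traces of all elements of $G$ is generated by the traces of the elements of $\{g_i, g_ig_j, g_ig_jg_k : 1 \le i \le j \le k \le n\}$, hence is contained in $\R$, so $\tr(g) \in \R$ for every $g \in G$. Since $F$ is essential, $\rho|_{\pi_1(F)}$ is irreducible, so the Morgan--Shalen lemma quoted above applies and gives that $\rho|_{\pi_1(F)}$ is conjugate into $SU(2)$ or into $\slr$. I would then rule out the $SU(2)$ case exactly as the excerpt indicates: every nontrivial element of $SU(2)$ acts on $\H^3$ with a fixed point, but $\rho$ is faithful onto a torsion-free Kleinian group and $\pi_1(F)$ is infinite, so no nontrivial element of $G$ fixes a point of $\H^3$. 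Hence $G$ is conjugate into $\slr$, and projecting to $\pslc$ shows $\rho(\pi_1(F))$ conjugates into $\pslr$; Proposition~\ref{Conjugate} then yields that $F$ is Fuchsian.

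I do not expect a genuine obstacle here — the corollary is essentially a bookkeeping consequence of the cited results. The only point that warrants care is the elimination of the $SU(2)$ alternative in the Morgan--Shalen dichotomy, which relies on torsion-freeness of the holonomy; and one must be slightly careful to state cleanly the passage between $\slc$ and $\pslc$ and between ``conjugate into $\slr$'' and ``Fuchsian,'' but this is already set up in the discussion around Proposition~\ref{Conjugate}.
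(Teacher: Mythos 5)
Your proof is correct and follows essentially the same route as the paper, which presents the corollary as an immediate consequence of Proposition~\ref{Conjugate}, the Morgan--Shalen trace dichotomy (with the $SU(2)$ alternative excluded via fixed points and torsion-freeness), and Lemma~\ref{generator_G}. You have simply spelled out the bookkeeping that the paper leaves implicit under the phrase ``From the above, we get the following corollary.''
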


\subsection{Normal Surfaces}
In this section we review some basic normal surface theory following the work in~\cite{ojm/1200786486}. Let $M$ be a $3$-manifold with a triangulation $\mathcal{T}$. 
An \textit{elementary disk} $E$ in a tetrahedron $\Delta$ of $\mathcal{T}$ is a properly embedded disk that meets each edge of $\Delta$ in at most one point and each face of $\Delta$ in at most one line. 
A surface is said to be \textit{normal} if it is in general position with the 1-skeleton $\mathcal{T}^{(1)}$ of $\mathcal{T}$ and meets each tetrahedron only in elementary disks. 
We follow the convention that if $E \cap \mathcal{T}^{(1)}$ is a planar set then $E$ is planar and if not then $E$ is the cone $b \ast \partial E$ where $b$ is the centroid of the 3-simplex $\Delta$ spanned by $E \cap \mathcal{T}^{(1)}$. Elementary disks and hence normal surfaces are uniquely determined by their intersection points with $\mathcal{T}^{(1)}$.
A \textit{normal isotopy} of $M$ is an isotopy that leaves every simplex of $\mathcal{T}$ invariant.
There are exactly 7 normal isotopy classes of elementary disks: four triangles that each cut off a vertex and three quadrilaterals that separate the three pairs of disjoint edges.
We call such normal isotopy classes the \textit{disk types} of a tetrahedron.
Similarly, the normal isotopy classes of arcs t from the intersection of elementary disks and each $2$-face of $\Delta$ are called the \textit{arc types}.

\begin{figure}
	\centering
    \includegraphics[width=\textwidth]{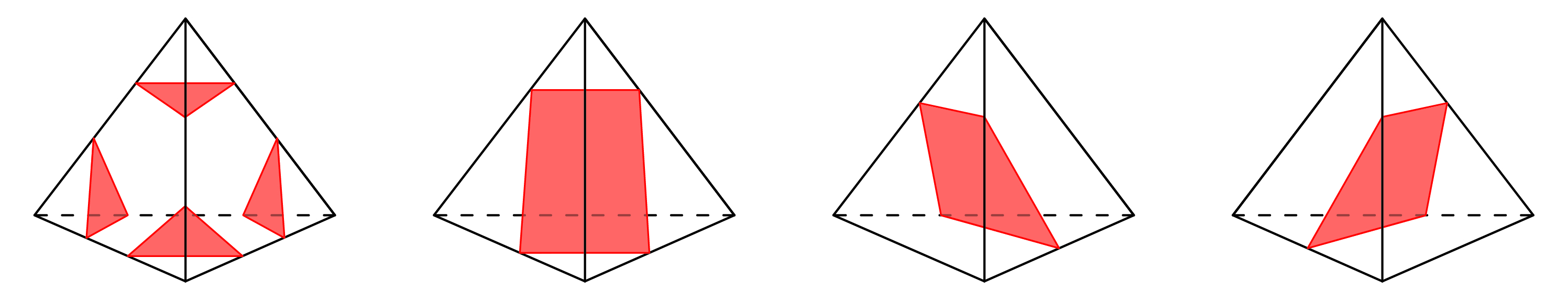}
	\caption{The 7 disk types.}
	\label{disktypes}
\end{figure}

The following demonstrates the generality of normal surfaces. 

\begin{theorem}[\cite{JACO1984195} Theorem 1.2, see Theorem 5.2.14 in \cite{Schultens2014IntroductionT3} for proof]\label{allsurface_iso_normal}
    Let $M$ be irreducible and let $S$ be a closed, incompressible surface in $M$. For any given triangulation $\mathcal{T}$ there is a normal surface $\mathscr{S}$ isotopic to $S$. 
\end{theorem}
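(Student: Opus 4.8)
The plan is to run the classical normalization argument of Haken (cf.\ Jaco--Oertel): put $S$ in a position minimizing a complexity measure and show that a minimizer is automatically normal. First I would isotope $S$ into general position with respect to $\mathcal{T}$, so that $S$ misses the vertices, is transverse to each $2$-simplex, and meets the $1$-skeleton $\mathcal{T}^{(1)}$ transversely in finitely many points. Define the \emph{weight} $w(S) := |S \cap \mathcal{T}^{(1)}|$, and among all surfaces isotopic to $S$ and in general position fix one minimizing the pair $\bigl(w(S),\, |S \cap \mathcal{T}^{(2)}|\bigr)$ lexicographically, where $\mathcal{T}^{(2)}$ denotes the $2$-skeleton. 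Everything below shows that such a minimizer meets every face in normal arcs and every tetrahedron in elementary disks, and is therefore the desired $\mathscr{S}$.

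Second, I would analyze $S \cap \sigma$ for a $2$-simplex $\sigma$: a disjoint collection of simple closed curves and properly embedded arcs in the disk $\sigma$. Pick an innermost simple closed curve $c$; it bounds a subdisk $D \subset \sigma$ with interior disjoint from $S$, so $D$ is a compressing disk unless $c$ bounds a disk $D' \subset S$. Incompressibility of $S$ forces the latter, and then the sphere $D \cup D'$ bounds a ball by irreducibility of $M$; isotoping $S$ across this ball removes $c$ and strictly decreases $|S \cap \mathcal{T}^{(2)}|$ without changing the weight, contradicting minimality. Hence $S \cap \sigma$ is a union of arcs. If some arc has both endpoints on a single edge $e$, take an innermost such arc $\alpha$; it cuts off a disk $D \subset \sigma$ with $\partial D = \alpha \cup \beta$, $\beta \subset e$, whose interior meets $S$ only in arcs with both endpoints on $e$. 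Pushing $S$ across $D$ and just past $e$ is an isotopy strictly lowering the weight, again impossible. So every arc of $S \cap \sigma$ joins two distinct edges, i.e.\ is a normal arc.

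Third, I would consider a component $P$ of $S \cap \Delta$ for a tetrahedron $\Delta$; its boundary consists of normal curves in the $2$-sphere $\partial \Delta$. Since each such curve is a cycle of normal arcs, the finitely many combinatorial possibilities are controlled: the curves of length $3$ and $4$ are precisely the boundaries of the four vertex triangles and the three quadrilaterals. I would show that every other possibility is excluded by minimality: if $P$ is not a disk, or is a disk bounded by a normal curve that meets some edge more than once or has length at least $5$, then a counting argument on $\partial \Delta$ produces an innermost configuration (a subdisk of $\partial\Delta$ together with a subdisk of $P$) yielding a weight-reducing or curve-count--reducing isotopy of $S$. Ruling these out leaves only elementary disks in each tetrahedron, so the minimizer is normal and isotopic to $S$, completing the proof.

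I expect the main obstacle to be the combinatorial bookkeeping of the third step: enumerating the normal-curve types on $\partial\Delta$, verifying that every non-elementary pattern genuinely admits a complexity-reducing isotopy, and checking that each surgery keeps $S$ embedded, in general position, and does not raise the complexity elsewhere. The two topological hypotheses enter precisely in steps two and three: irreducibility is what lets us fill in the $2$-spheres produced by surgering along closed intersection curves, and incompressibility is what guarantees those curves bound disks in $S$ in the first place.
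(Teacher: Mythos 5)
Your argument is the classical Haken normalization proof, which is exactly what the paper invokes by citing Theorem 5.2.14 of Schultens (the paper gives no independent argument of its own). The only place you leave real work undone --- and you flag it yourself --- is the third step: its rigorous form needs both an argument that each component of $S \cap \Delta$ is a disk (another use of irreducibility and incompressibility inside the ball $\Delta$, working innermost among the boundary curves on $\partial\Delta$) and the combinatorial lemma that a normal curve on $\partial\Delta$ has length $3$, $4$, or at least $8$, with a curve of length at least $8$ necessarily meeting some edge twice and thereby admitting an outermost fold in a face that yields a weight-reducing isotopy; filling in that dichotomy completes the proof along precisely the lines you describe.
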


The significance of understanding normal surfaces is that they can be expressed uniquely as a tuple of non-negative numbers.
Fix an ordering $d_1, d_2, \dots, d_{7t}$ on all disk types in $\mathcal{T}$ ($t$ is the number of tetrahedra in $\mathcal{T}$). To a normal surface $F$ we can assign a $7t$-tuple $\overrightarrow{F} = (x_1, x_2, \dots, x_{7t})$ where $x_i$ is the number of elementary disk types of type $d_i$ in $F$. $\overrightarrow{F}$ is called the \textit{normal coordinate} of $F$.
Any normal surface is uniquely determined by its normal coordinate up to normal isotopy.

A normal coordinate $\overrightarrow{x}$ is said to be \textit{admissible} if the corresponding normal surface has at most one nonzero quadrilateral disk type in every tetrahedron.
There is a one-to-one correspondence between all admissible solutions in the solution space of $\mathbb{R}^{7t}$ and normal surfaces in $M$.
More details about solution spaces are available in Appendix~\ref{normal_coords_appendix}.

\section{The Algorithms}\label{Algorithm}
In this section we present an algorithm that determines whether or not a 3-manifold contains a totally geodesic surfaces. 

\subsection{Computational Representations of Mathematical Objects}
We first introduce the specific computational descriptions of the mathematical objects we use in our algorithms in this section.

A cusped hyperbolic 3-manifold can be represented by an ideal triangulation.
This is a list of ideal tetrahedra, their shape parameters, and how each face of each tetrahedron is glued.
Fundamental groups of 3-manifolds and surfaces can be written using a specific group presentation: 
a set of generators $\{a_1,\dots, a_n\}$ for the group and a specific set of relations $\{r_1,\dots,r_m\}$ expressed as trivial words in the group.
A detailed treatment of generating sets for fundamental groups will be given in Section~\ref{CheckingTotGeoSec}.
Every fundamental group element can be represented as a word in the alphabet consisting of generators and their inverses.

The holonomy representation of an element of the fundamental group described in Section~\ref{hyperbolic_background} can be found from its group presentation and a set of matrices representing the generators as follows: any element $a_{i_1} a_{i_2} \cdots a_{i_k}$ in the group presentation is represented by the product of matrices that correspond to each of the generators $a_{i_1}, \dots, a_{i_k}$.
Due to the following result, holonomy representations of manifolds are of a certain form:

\begin{theorem}[Theorem 3.1.2 and Corollary 3.2.4 of \cite{MaclachlanReid}]
    For a finite-volume hyperbolic 3-manifold $M$, the holonomy representation $\rho$ can always be conjugated to lie in $PSL(2, \Q(\alpha))$ where $\alpha$ is some algebraic number.
\end{theorem}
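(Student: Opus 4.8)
The plan is to split the statement into two parts mirroring the two cited results. Write $\Gamma = \rho(\pi_1(M)) \subseteq \slc$, using the $\slc$-lift of the holonomy (which exists by Culler's theorem, as noted above), and let $k\Gamma = \Q(\tr\gamma : \gamma \in \Gamma)$ be its trace field. The two parts are: (i) $k\Gamma$ is a number field; and (ii) any finitely generated irreducible subgroup of $\slc$ whose trace field lies in a number field $k$ can be conjugated so that all matrix entries lie in some fixed finite extension of $k$. Since a finite extension of $\Q$ is generated by a single algebraic number $\alpha$ by the primitive element theorem, combining (i) and (ii) places $\Gamma$ in $SL(2,\Q(\alpha))$, hence $\rho$ into $PSL(2,\Q(\alpha))$.

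For part (i), fix a finite generating set $\gamma_1, \dots, \gamma_n$ of $\pi_1(M)$ — finitely many suffice since $M$ admits a finite ideal triangulation. By Lemma~\ref{generator_G}, $k\Gamma$ is generated over $\Q$ by the finitely many traces $\tr\rho(w)$ with $w \in \{\gamma_i,\ \gamma_i\gamma_j,\ \gamma_i\gamma_j\gamma_k\}$, so it is enough to see that each such trace is algebraic over $\Q$. This is where Mostow--Prasad rigidity enters: the complete finite-volume hyperbolic structure on $M$ is unique up to isometry, so $\rho$ is an isolated point of the character variety $X(\pi_1(M),\slc)$, an affine variety defined over $\Q$; the coordinates of an isolated $\C$-point of such a variety lie in $\overline{\Q}$, and each of the traces above is the value at $\rho$ of a regular function on $X$ defined over $\Q$, hence algebraic. (Equivalently one may argue through the triangulation directly: Thurston's gluing and completeness equations have integer coefficients, rigidity forces the complete solution to be an isolated point of their common zero locus, so the shape parameters are algebraic, and the relevant traces are rational functions of the shape parameters.) Therefore $k\Gamma$ is a finitely generated algebraic extension of $\Q$, i.e. a number field.

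For part (ii), because $M$ is hyperbolic of finite volume, $\Gamma$ is non-elementary, hence irreducible, so --- after possibly replacing some generators by products --- we may assume $\gamma_1$ and $\gamma_2$ have no common fixed point on $\partial\H^3$. I would conjugate in $\slc$ to put $\gamma_1$ in upper triangular form fixing $\infty$ (diagonalizing it if it is loxodromic, and normalizing the translation length if it is parabolic), and then use the residual conjugation freedom to normalize one off-diagonal entry of $\gamma_2$. A direct computation then writes each entry of each $\gamma_i$ as a rational expression in the entries of $\gamma_1, \gamma_2$ and the traces $\tr(\gamma_i), \tr(\gamma_i\gamma_1), \tr(\gamma_i\gamma_2)$; the only operations beyond the field operations of $k\Gamma$ that appear are the finitely many square roots needed to diagonalize $\gamma_1$ and to carry out the normalizations. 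Hence all entries lie in $k\Gamma(\sqrt{d_1}, \dots, \sqrt{d_m})$ for some $d_\ell \in k\Gamma$, which is again a finite extension of $\Q$; taking $\alpha$ to be a primitive element of it finishes the argument.

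The main obstacle is the bookkeeping in part (ii): obtaining uniform rational formulas for the matrix entries across all the possibilities --- in particular when $\gamma_1$ or $\gamma_2$ is parabolic, which is unavoidable for cusped manifolds --- and keeping track of precisely which square roots must be adjoined. That last point is exactly why the statement quantifies over an auxiliary algebraic number $\alpha$ rather than asserting $\Gamma \subseteq PSL(2,k\Gamma)$ outright. The rigidity input in part (i) is conceptually the heart of the matter but can be cited as a black box.
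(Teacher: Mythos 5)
The paper does not prove this statement---it is quoted verbatim from Maclachlan--Reid---so there is no internal proof to compare against; your two-part decomposition (trace field is a number field, then conjugate an irreducible group with traces in $k$ into $SL(2,K)$ for $K$ a finite extension of $k$) is exactly the structure of the cited Theorem 3.1.2 and Corollary 3.2.4. However, your main-line argument for part (i) has a genuine gap in precisely the case this paper cares about. For a cusped finite-volume $M$, the discrete faithful character is \emph{not} an isolated point of the $\slc$-character variety: by Thurston's Dehn surgery deformation theory, the component containing it has complex dimension at least the number of cusps. Mostow--Prasad rigidity gives uniqueness of the \emph{complete} structure, not isolation among all characters. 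The standard repair (and what Maclachlan--Reid actually do) is to pass to the $\Q$-defined subvariety cut out by the additional equations $\tr\rho(\mu_i)=\pm 2$ forcing the peripheral elements to stay parabolic, and then invoke Garland--Weil local rigidity relative to the cusps to conclude the character is isolated \emph{there}. Your parenthetical alternative via the gluing and completeness equations is essentially sound---but note that isolation of the complete solution in that zero locus is a Neumann--Zagier statement (the cusp holonomy map is a local biholomorphism), again not a direct consequence of Mostow rigidity alone.

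Part (ii) is the right idea and is standard, but you are making it harder than necessary: a non-elementary Kleinian group always contains a loxodromic element (indeed two with disjoint fixed-point sets), so you may take $\gamma_1$ loxodromic and never face the parabolic normalization you flag as the ``main obstacle.'' Diagonalizing $\gamma_1$ adjoins only the eigenvalue $\lambda$ with $\lambda^2-\tr(\gamma_1)\lambda+1=0$, an at most quadratic extension of the trace field, and the remaining entries are then rational in the traces and $\lambda$; this is exactly Maclachlan--Reid's Lemma 3.2.3/Corollary 3.2.4 bookkeeping. With the rigidity step corrected as above, the argument goes through.
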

This allows us to avoid some of the theoretical downsides of working with approximate numbers by using a representation of a number field in the following way, as found in \cite{SnapPaper}.
Given an arbitrary algebraic field extension $\Q(\alpha)$ where $\alpha$ is a root of the irreducible degree $n$ polynomial $p(x)\in\Q[x]$, every element $\beta\in\Q[x]$ can be uniquely represented as $c_0 + c_1\alpha + c_2\alpha^2 + \ldots + c_{n-1}\alpha^{n-1}$. 
More information about this way of representing algebraic field elements can be found in Appendix~\ref{field_appendix}.

In addition to fields, we also need information about specific \textit{embeddings} of a field into $\C$. 
Specifically, we need to determine whether a given embedding of a field into $\C$ is contained in $\R$. 
This can be done numerically as follows.
Every embedding of $\Q(\alpha)$ into $\C$ is completely determined by which root $\alpha_0$ of $p(x)$ $\alpha$ is sent to.
Therefore, it suffices to determine whether or not $\alpha_0$ is a real root of $p(x)$ or not.
Using Sturm's theorem or one of its generalizations (as in \cite{number_real_roots}), the number of real roots $r$ of $p(x)$ can be determined exactly.
The complex roots can be approximated by one's favorite convergent numerical method which will ensure that the root will not fall on the real line.
Once $n-r$ complex roots have been determined, the remaining roots must be real.

\subsection{Checking Whether a Surface is Totally Geodesic}\label{CheckingTotGeoSec}
We state two lemmas on finding the fundamental group of a space from a cellulation.

\begin{figure}
	\centering
	\includegraphics[scale = 0.4]{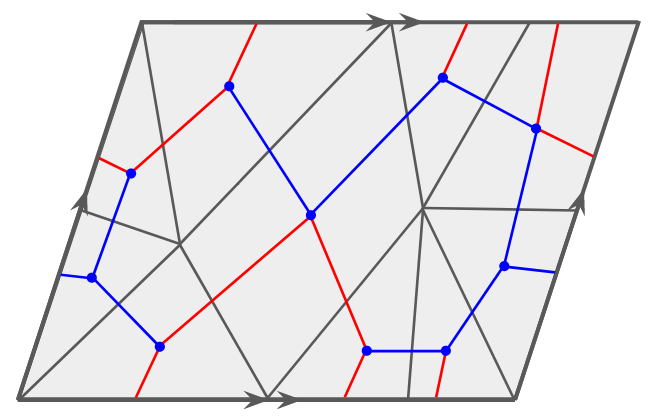}
	\caption{An example application of Lemma~\ref{fundamental_skeleton_surface} on a cellulation of a torus by polygons. The blue and red edges together form the dual 1-skeleton, with the blue edges making up the tree $\mathcal{T}$ and the red edges forming the generating set.}
	\label{Dual2D2}
\end{figure}

\begin{lemma}[\cite{EppsteinFundamentalGroup} Lemma 3.2]\label{fundamental_skeleton_surface}
    Assume that $F$ is a closed connected surface with a cellulation $\mathcal{C}$ by polygons. Then one can construct a 2-dimensional \textit{dual cellulation} $\mathcal{D}$ which is homeomorphic to $F$.
    Let $\mathcal{T}$ be a spanning tree of the 1-skeleton of $\mathcal{D}$. Then the edges in $\mathcal{D}\setminus \mathcal{T}$ form a generating set in some presentation of the fundamental group of $F$. 
\end{lemma}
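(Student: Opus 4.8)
The plan is to realize $\mathcal{D}$ as the Poincar\'e dual cell structure on $F$ and then apply the standard presentation of the fundamental group of a CW-complex with a single $0$-cell.

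First I would construct $\mathcal{D}$. Put one dual vertex $\hat P$ in the interior of each polygon $P$ of $\mathcal{C}$. Because $F$ is a closed surface, each edge $e$ of $\mathcal{C}$ lies on the boundary of exactly two polygons $P$ and $P'$ (possibly $P=P'$), and I would take a dual edge $\hat e$ from $\hat P$ to $\hat{P'}$ crossing $e$ transversally in a single point. Again because $F$ is closed, the link of each vertex $v$ of $\mathcal{C}$ is a circle, so a small regular neighborhood $N(v)$ is a disk whose boundary alternates between arcs of dual edges and arcs lying inside the polygons incident to $v$; I would declare $N(v)$ to be the dual $2$-cell $\hat v$. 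One then checks that $\mathcal{D}$ is a CW-structure on a space homeomorphic to $F$: this is the classical fact that a cellulation of a closed surface admits a dual cellulation, and I would either cite it or verify it directly by checking that the dual cells cover $F$ and meet only along lower-dimensional dual cells.

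Next I would compute $\pi_1$. Since $\mathcal{D}$ is connected and $\mathcal{T}$ is a spanning tree of $\mathcal{D}^{(1)}$, the subcomplex $\mathcal{T}$ is contractible, so the quotient map $q\colon\mathcal{D}\to\mathcal{D}/\mathcal{T}$ is a homotopy equivalence; hence $\pi_1(F)\cong\pi_1(\mathcal{D}/\mathcal{T})$. In $\mathcal{D}/\mathcal{T}$ every $0$-cell of $\mathcal{D}$ is collapsed to one point $\ast$, and the $1$-cells that survive are precisely the images of the edges of $\mathcal{D}^{(1)}\setminus\mathcal{T}$, now loops based at $\ast$. Thus the $1$-skeleton of $\mathcal{D}/\mathcal{T}$ is a wedge of circles indexed by the edges of $\mathcal{D}\setminus\mathcal{T}$, and its fundamental group is free on that set.

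Finally, by cellular approximation the inclusion of the $1$-skeleton into $\mathcal{D}/\mathcal{T}$ is $\pi_1$-surjective (attaching the dual $2$-cells, one per vertex of $\mathcal{C}$, only imposes relations), so the classes of the edges of $\mathcal{D}\setminus\mathcal{T}$ generate $\pi_1(F)$; reading off the boundary word of each dual $2$-cell then yields the relators of an actual presentation. I expect the only genuine obstacle to be the verification that $\mathcal{D}$ is homeomorphic to $F$: this is where closedness of $F$ enters essentially, ensuring that every edge bounds exactly two polygon corners and that every vertex link is a circle, and once that is in hand the fundamental-group bookkeeping is routine.
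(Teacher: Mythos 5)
Your proposal is correct and follows essentially the same route as the paper's (much terser) proof: build the Poincar\'e dual cellulation, collapse the spanning tree to get a wedge of circles for the $1$-skeleton, and observe that the surviving edges generate $\pi_1(F)$ with relators coming from the dual $2$-cells. Your version simply fills in the details the paper leaves implicit (the explicit dual-cell construction using closedness of $F$, the homotopy equivalence $\mathcal{D}\to\mathcal{D}/\mathcal{T}$, and the $\pi_1$-surjectivity of the $1$-skeleton inclusion).
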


\begin{figure}
	\centering
	\includegraphics[viewport = 10 90 650 750, scale = 0.35]{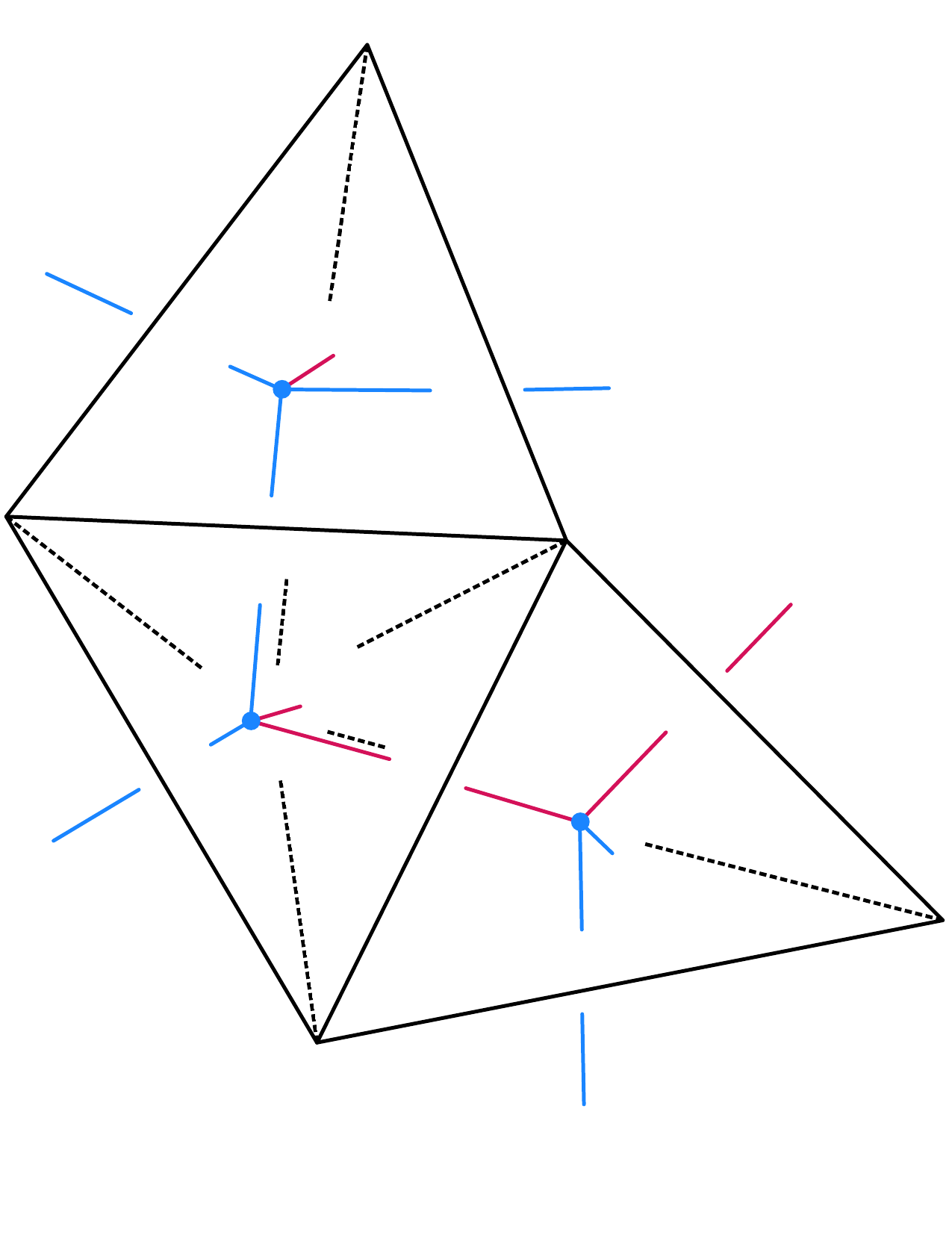}
	\caption{A partial drawing of a triangulated 3-manifold with its dual 1-skeleton as in Lemma~\ref{fundamental_skeleton_manifold}. The blue and red edges together form the dual 1-skeleton, with the blue edges belonging to the tree $\mathcal{T}$ and the red edges belonging to the generating set.}
	\label{Dual3D}
\end{figure}

\begin{lemma}\label{fundamental_skeleton_manifold}
    Assume that $M$ is a $3$-manifold with a cellulation $\mathcal{C}$ by ideal tetrahedra. Then one can construct a 2-dimensional \textit{dual cellulation} $\mathcal{D}$ which is homotopy equivalent to $M$.
    Let $\mathcal{T}$ be a spanning tree of the 1-skeleton of $\mathcal{D}$. Then the edges in $\mathcal{D}\setminus \mathcal{T}$ form a generating set in a presentation of the fundamental group of $M$.
\end{lemma}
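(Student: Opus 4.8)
The plan is to mirror the construction in Lemma~\ref{fundamental_skeleton_surface} one dimension up, using the standard duality between a cellulation of an $n$-manifold and its ``CW-dual,'' but being careful that $M$ is noncompact with ideal vertices, so the dual complex is not literally homeomorphic to $M$ — only homotopy equivalent. First I would build the dual cellulation $\mathcal{D}$: place a dual $0$-cell at the barycenter of each ideal tetrahedron of $\mathcal{C}$, a dual $1$-cell crossing each $2$-face of $\mathcal{C}$ (joining the barycenters of the two tetrahedra glued along that face), and a dual $2$-cell transverse to each edge of $\mathcal{C}$, whose boundary loops through the dual $0$- and $1$-cells corresponding to the tetrahedra and faces cyclically arranged around that edge. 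Since the ideal vertices of $\mathcal{C}$ sit ``at infinity,'' the union of the open stars of the ideal vertices is an open collar neighborhood of the cusps, and collapsing that collar deformation retracts $M$ onto the subcomplex spanned by the dual $\le 2$-skeleton; this is where I would invoke that $M$ deformation retracts onto its ``spine,'' exactly the $2$-complex $\mathcal{D}$. Hence $\mathcal{D} \simeq M$, in particular $\pi_1(\mathcal{D}) \cong \pi_1(M)$.

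Next I would read off the presentation from $\mathcal{D}$ by the usual spanning-tree argument for $2$-complexes. Choose a spanning tree $\mathcal{T}$ of the $1$-skeleton $\mathcal{D}^{(1)}$ and collapse it to a point; this does not change homotopy type, so $\pi_1(\mathcal{D}) \cong \pi_1(\mathcal{D}/\mathcal{T})$. The quotient $\mathcal{D}/\mathcal{T}$ has a single $0$-cell, one $1$-cell (hence one generator) for each edge of $\mathcal{D}^{(1)}\setminus\mathcal{T}$ — that is, for each $2$-face of $\mathcal{C}$ not crossed by $\mathcal{T}$ — and one $2$-cell (hence one relator) for each edge of $\mathcal{C}$. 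By van Kampen's theorem applied to a $2$-complex with one vertex, $\pi_1$ has exactly this presentation, so in particular the edges of $\mathcal{D}\setminus\mathcal{T}$ form a generating set. I would orient each such dual edge (say by the ambient orientation of $M$ together with a chosen transverse orientation of the corresponding face of $\mathcal{C}$) so that the generators are unambiguously defined.

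I expect the main obstacle to be the homotopy-equivalence claim rather than the combinatorial group theory, which is entirely standard once $\mathcal{D}\simeq M$ is in hand. The subtlety is that, unlike the closed surface case of Lemma~\ref{fundamental_skeleton_surface} where the dual cellulation is a genuine cellulation of $F$, here the ``dual'' of a triangulated noncompact manifold with ideal vertices is not a manifold and does not fill out $M$: the dual $3$-cells (one per ideal vertex) are missing or ``open toward the cusp.'' The careful statement is that $\mathcal{D}$ is the dual $2$-skeleton together with the identification that each would-be dual $3$-cell retracts onto its boundary $2$-sphere's image, and one must check that the resulting $2$-complex is indeed a spine of $M$, i.e. that $M$ deformation retracts onto it. This follows because $M$ is the interior of a compact manifold with torus boundary, the truncated tetrahedra give a cellulation of that compact manifold, and a compact $3$-manifold with nonempty boundary collapses onto a $2$-dimensional spine dual to any ideal triangulation; one can cite the standard fact (e.g. as used throughout SnapPy/Regina-style computations, cf.~\cite{EppsteinFundamentalGroup}) or give the collapse explicitly by pushing each truncated tetrahedron away from its truncation faces. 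Once this is established, the rest of the proof is the spanning-tree collapse above.
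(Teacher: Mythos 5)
Your proof takes essentially the same route as the paper: both construct the dual $2$-complex $\mathcal{D}$ (the paper by passing to an ordinary triangulation $\hat{\mathcal{C}}$ with the ideal vertices coned in and taking the $2$-skeleton of its dual; you by building the dual cells directly), both argue that $M$ deformation retracts onto $\mathcal{D}$ by pushing away from the (removed) vertices, and both then read off the presentation via the standard spanning-tree collapse. Your write-up is more explicit about why the homotopy equivalence holds in the cusped setting, but the underlying argument is the same.
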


\begin{figure}
	\centering
	\includegraphics[width=100mm]{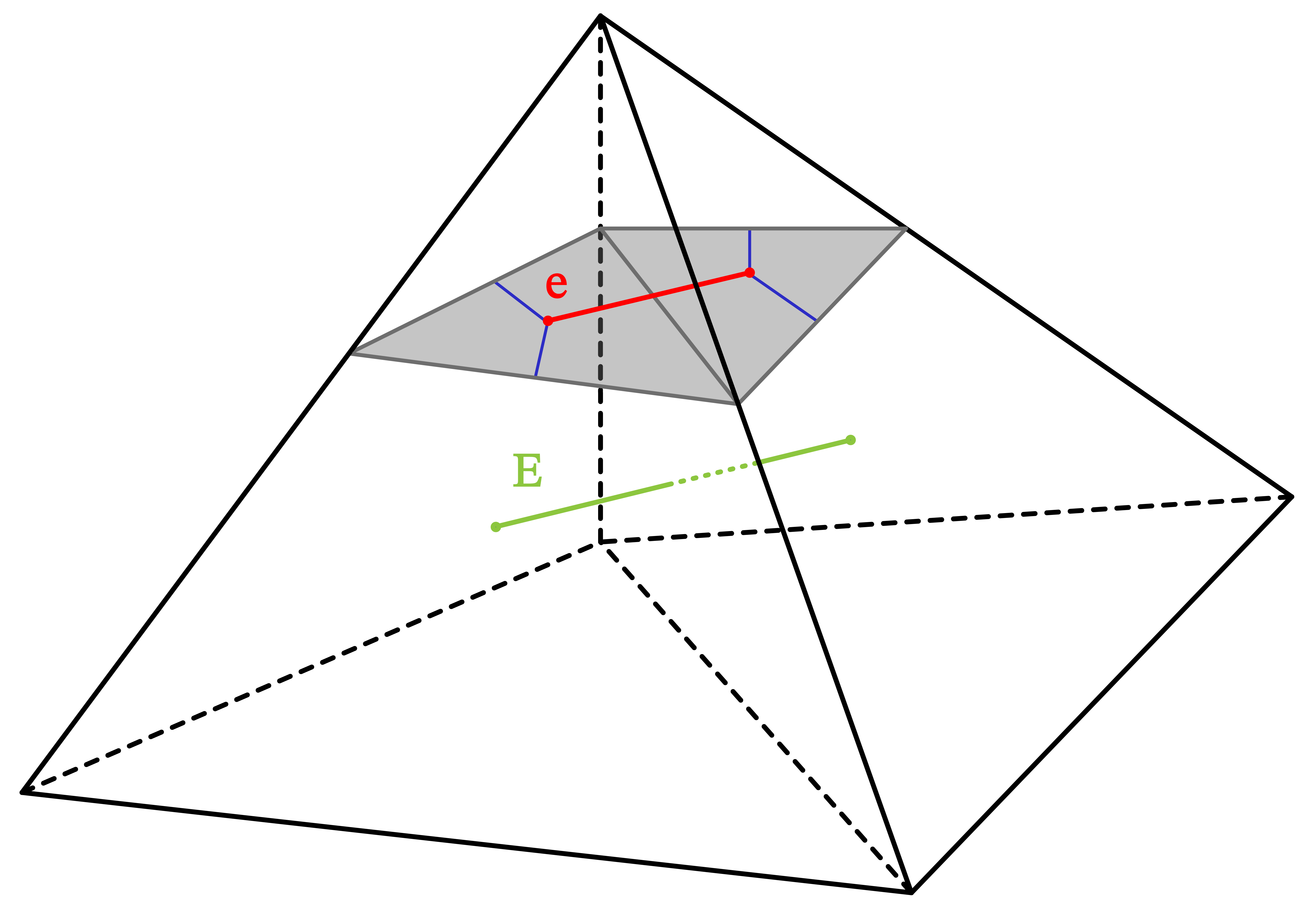}
	\caption{Suppose $F$ intersects the two tetrahedra in this figure in two triangles. $e$ is an element of the generating set of $\pi_1(F)$ and it corresponds to the edge $E$ in the dual 1-skeleton of $M$.}
    \label{stepc}
\end{figure}

We present algorithms that determine whether or not a given surface is totally geodesic. 

\begin{algorithm}\label{skeletal_embedding}
    Given a normal surface $F$ in a triangulated 3-manifold $M$, we get an embedding $i_* : \pi_1(F) \to \pi_1(M)$ by the following process:
    \begin{enumerate}[a)]
        \item Choose a basepoint for the fundamental group of $F$ in some normal disk in $F$.
        \item Obtain generating sets for $\pi_1(F)$ and $\pi_1(M)$ as in Lemmas~\ref{fundamental_skeleton_surface} and~\ref{fundamental_skeleton_manifold} with the spanning trees given by $\mathcal{T}_F$ and $\mathcal{T}_M$.
        \item Each element of the generating set for $\pi_1(F)$ corresponds to an edge $e$ in the dual 1-skeleton of $F$. The edge $e$ in turn, lies transversely to a face in the triangulation of $M$ which corresponds to an edge in the dual 1-skeleton of $M$. See Figure~\ref{stepc}.
        \item Get a loop in $F$ by taking a path in $\mathcal{T}_F$ from the basepoint disk to $e$ and then taking a path from $e$ back to the basepoint in $\mathcal{T}_F$. This gives a cycle $\gamma$ in the dual 1-skeleton of $F$.
        \item For each edge in $\gamma$, we get the corresponding edge in the dual 1-skeleton of $M$.
        \item The list of edges in the cycle $\gamma$ then correspond to an element of $\pi_1(M)$.
    \end{enumerate}
\end{algorithm}

\begin{lemma}\label{skeletal_embedding_lemma}
    Algorithm \ref{skeletal_embedding} terminates and the output is correct.
\end{lemma}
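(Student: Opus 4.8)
The plan is to treat the two claims of the lemma, termination and correctness, in turn; by ``correct'' I mean that the homomorphism the algorithm writes down on generators is the map $i_*\colon\pi_1(F)\to\pi_1(M)$ induced by the inclusion $i\colon F\hookrightarrow M$, so that, when $F$ is incompressible, it is the asserted embedding.

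Termination is the routine half. Because $M$ comes with a finite ideal triangulation $\mathcal T$ and $F$ is normal, $F$ meets each tetrahedron in finitely many elementary disks, so the cellulation of $F$ by its elementary disks is finite and hence so are the dual cellulations $\mathcal D_F$, $\mathcal D_M$ of Lemmas~\ref{fundamental_skeleton_surface} and~\ref{fundamental_skeleton_manifold}. Every step of Algorithm~\ref{skeletal_embedding} is then a finite graph computation: choosing the basepoint disk, building the spanning trees $\mathcal T_F$ and $\mathcal T_M$, listing one generator per edge of $\mathcal D_F\setminus\mathcal T_F$, forming for each generator $e$ the cycle $\gamma$ (the $\mathcal T_F$-path from the basepoint vertex to $e$, then $e$, then the return $\mathcal T_F$-path), translating each edge of $\gamma$ to its partner edge in $\mathcal D_M$ by a fixed finite lookup table, and reading the resulting edge-path off as a word in the generators of $\pi_1(M)$ via the explicit isomorphism of Lemma~\ref{fundamental_skeleton_manifold}. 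Hence the algorithm halts; the work is bounded polynomially in the number of tetrahedra and the size of the normal coordinates of $F$ should a complexity estimate be desired.

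For correctness I would show that for each generator the output word represents the image under $i_*$ of the corresponding element of $\pi_1(F)$; since $i_*$ is a homomorphism and those elements generate $\pi_1(F)$, this pins down the entire map. Three points do the work. (i) In the elementary-disk cellulation of $F$ the $1$-cells are exactly the normal arcs $F\cap\mathcal T^{(2)}$, so an edge $e$ of $\mathcal D_F$ crosses one normal arc, that arc lies in one $2$-face $f$ of $\mathcal T$, and $f$ is dual to one edge of $\mathcal D_M$; this makes the correspondence in steps (c)--(e) well defined. (ii) Consecutive edges of $\gamma$ meet at a vertex of $\mathcal D_F$, i.e.\ at an elementary disk $D$ contained in a single tetrahedron $\Delta$; the two faces crossed there are faces of $\Delta$, so the two associated edges of $\mathcal D_M$ share the vertex dual to $\Delta$, and since $\gamma$ is closed the image sequence of edges is a closed edge-path in $\mathcal D_M$. (iii) Represent $[\gamma]$ by a loop $\ell$ in $F$ running through the disks and arcs recorded by $\gamma$, isotoped within $F$ so that it crosses each recorded arc at the barycenter of its face; within each tetrahedron $\Delta$ the corresponding subarc of $i\circ\ell$ runs between barycenters of two faces of $\Delta$, and since $\Delta$ minus its ideal vertices is contractible this subarc is homotopic rel endpoints, inside $\Delta$, to the piece of the dual edge-path through the barycenter of $\Delta$. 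These local homotopies agree on shared faces, so they patch to a homotopy in $M$ from $i\circ\ell$ to the dual edge-path; hence $i_*[\ell]$ is represented by that edge-path, which is exactly the algorithm's output.

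I expect step (iii) to be the one real obstacle: assembling the tetrahedron-by-tetrahedron homotopies into a single homotopy in $M$ and, along the way, keeping the basepoints straight --- concretely, identifying $\pi_1(M)$ based at the barycenter of the basepoint disk $D_0$ with $\pi_1(M)$ based at the $\mathcal D_M$-vertex dual to the tetrahedron containing $D_0$ via a short arc inside that tetrahedron. This is the standard change-of-basepoint ambiguity in extracting $i_*$ from a presentation and does not affect the statement up to the customary conjugation; different choices of spanning trees likewise yield different generating sets but the same map $i_*$. Finally, I would remark that injectivity of the output is not something the algorithm checks: $i_*$ is an embedding exactly when $F$ is incompressible, which is automatic for the surfaces of interest here since a totally geodesic surface is essential by Lemma~\ref{tot_geo_essential}.
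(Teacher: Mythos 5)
Your proof is correct and follows essentially the same route as the paper: termination by finiteness of the dual cellulations, and correctness by showing that the edge-path the algorithm produces in the dual $1$-skeleton of $M$ is homotopic in $M$ to the inclusion image of the corresponding loop in $F$. You spell out the tetrahedron-by-tetrahedron local homotopy (your step (iii)) that the paper's terser argument leaves implicit, but the underlying idea is the same.
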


\begin{algorithm}\label{totally_geodesic}
    Given an essential normal surface $F$ in a triangulated 3-manifold $M$ and a lift of the holonomy representation of $\rho : \pi_1(M) \to SL(2,\Q(\alpha))$, the following algorithm solves \textsc{Totally Geodesic Surface}.
    \begin{enumerate}[a)]
        \item If $F$ is non-orientable, replace $F$ with $2F$, its orientable double cover, in the following.
        \item Compose the holonomy representation of $\rho : \pi_1(M) \to SL(2,\Q(\alpha))$ and the embedding $i_* : \pi_1(F) \to \pi_1(M)$ found in Algorithm~\ref{skeletal_embedding}. 
        \item Take the generators found in Lemma \ref{fundamental_skeleton_surface} as matrices in $SL(2,\Q(\alpha))$. 
        \item Construct the smallest field extension $\Q(\beta)$ (as a subfield of $\Q(\alpha)$) containing the traces of all generators and all products of pairs and triples of generators, using results from Section~4.5 of \cite{Cohen1993AlgebraicNT}.
        By Lemma~\ref{generator_G} this field will contain the traces of all elements in $\pi_1(F)$.
        \item Check whether the embedding of $\Q(\beta)$ into $\C$ is real. If not, the surface $F$ cannot be totally geodesic. Output \textsc{No}.
        \item If $F$ was originally non-orientable, then $F$ is totally geodesic. Output \textsc{Yes}.
        \item The only case that remains is if $F$ is orientable and the embedding of $\Q(\beta)$ into $\C$ is real. We now need to check that $F$ is not the double cover of a non-orientable surface. Let $\chi(F) = X$. 
        First enumerate all normal surfaces of Euler characteristic $X/2$.
        From Theorem 6.9 of \cite{Dunfield_2022} we can find all isotopy classes of essential normal surfaces of Euler characteristic $X$.
        Check if there is any normal surface in the isotopy class of $F$ that is a double of some normal surface of Euler characteristic $X/2$.
        \item If yes, $F$ can be isotoped to a double of a surface and $F$ is not totally geodesic (but it is the double cover of a totally geodesic surface). Output \textsc{No}.
        \item If the code has made it to this step, then $F$ is Fuchsian and is not the double cover of a non-orientable totally geodesic surface and is thus totally geodesic. Output \textsc{Yes}.
    \end{enumerate}
\end{algorithm}

\begin{theorem} \label{totally_geodesic_theorem}
    Algorithm \ref{totally_geodesic} produces a correct output.
\end{theorem}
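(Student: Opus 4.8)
The plan is to verify that Algorithm~\ref{totally_geodesic} is correct by checking that each branch produces the right answer to \textsc{Totally Geodesic Surface}, and then argue termination. Since all surfaces in the input are essential (and, by step~a), orientable by the time we pass step~a), we may invoke the equivalence established via Proposition~\ref{Conjugate}, the Morgan--Shalen lemma, and Corollary~\ref{trace_generators}: an essential orientable surface $F$ is Fuchsian if and only if $\rho(\pi_1(F))$ has all real traces, which by Lemma~\ref{generator_G} is detected by the finite trace set $\{g_i, g_ig_j, g_ig_jg_k : 1 \le i \le j \le k \le n\}$. The correctness of the matrices computed in steps~b)--c) rests on Lemma~\ref{skeletal_embedding_lemma}, which guarantees that $i_*$ is a genuine injection $\pi_1(F)\hookrightarrow \pi_1(M)$, so that composing with $\rho$ gives precisely the induced representation $\rho|_{\pi_1(F)}$ whose image is generated by the matrix images of the chosen generators.

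Next I would walk through the output branches. For the field computation in step~d), the key point is that $\Q(\beta)$, constructed as the subfield of $\Q(\alpha)$ generated by the traces in the Lemma~\ref{generator_G} set, contains \emph{all} traces of $\rho(\pi_1(F))$; hence the embedding $\Q(\beta)\hookrightarrow\C$ is real if and only if every such trace is real. So step~e) correctly outputs \textsc{No} when this embedding is not real: in that case some trace is non-real, $F$ is not Fuchsian, and by Lemma~\ref{tot_geo_essential}'s converse direction (totally geodesic $\Rightarrow$ Fuchsian) $F$ is not totally geodesic. Step~f) handles the non-orientable case: if the original $F$ was non-orientable and the doubled surface $2F$ is Fuchsian, then by the discussion following Proposition~\ref{orientable_homology} (being isotopic to a totally geodesic surface is equivalent to being Fuchsian for non-orientable surfaces, since $\rho(\pi_1(F))$ lands in the normalizer of $\pslr$), $F$ is totally geodesic, so \textsc{Yes} is correct. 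The delicate branch is step~g): here $F$ is orientable and Fuchsian, but an orientable Fuchsian surface is totally geodesic \emph{unless} it is the double cover of a non-orientable totally geodesic surface. I would argue that this dichotomy is exhaustive using the statement in the excerpt that a Fuchsian surface is either isotopic to a totally geodesic surface or is a double cover of a non-orientable totally geodesic one; the algorithm then distinguishes the two cases by enumerating, via Theorem~6.9 of~\cite{Dunfield_2022}, all normal surfaces of Euler characteristic $X/2$ and checking whether any normal surface isotopic to $F$ is the geometric double of one of them. If such a double exists, $F$ is the (orientable) double cover of a non-orientable surface and cannot itself be totally geodesic, so step~h) correctly outputs \textsc{No}; otherwise step~i) correctly outputs \textsc{Yes}.

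Finally, termination: each step is a finite computation. Steps~b)--c) terminate by Lemma~\ref{skeletal_embedding_lemma}; step~d) terminates because the trace set is finite and the subfield-generation procedure of~\cite{Cohen1993AlgebraicNT} is algorithmic; step~e) terminates by the root-counting discussion (Sturm's theorem plus a convergent approximation scheme isolating the non-real roots) from the subsection on computational representations; and steps~g)--h) terminate because, for fixed Euler characteristic, there are only finitely many normal surfaces up to normal isotopy and Theorem~6.9 of~\cite{Dunfield_2022} yields finitely many isotopy classes, so checking the doubling condition is a finite search.

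The main obstacle I anticipate is step~g)/the double-cover subtlety. The conceptual content---that an orientable Fuchsian surface fails to be totally geodesic exactly when it is the orientable double cover of a non-orientable totally geodesic surface---needs to be pinned down carefully, and the algorithmic realization requires that ``being isotopic to $F$'' and ``being the geometric double of a surface of half Euler characteristic'' are both decidable from normal coordinates. I would lean on Theorem~\ref{allsurface_iso_normal} (every incompressible surface is isotopic to a normal one), on Lemma~\ref{tot_geo_essential} (so we only ever deal with essential, hence incompressible, surfaces), and on the enumeration results of~\cite{Dunfield_2022} to make this rigorous; the rest is bookkeeping that I would not grind through here.
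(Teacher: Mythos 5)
Your proposal is correct and follows the same underlying logic as the paper, whose entire proof is the single line ``This follows from Corollary~\ref{trace_generators}''; you have simply made explicit the case analysis (non-real trace, non-orientable input, double-cover dichotomy) and termination that the paper leaves to the reader. The only quibble is a citation slip: the implication ``totally geodesic $\Rightarrow$ Fuchsian'' is stated in the text before Lemma~\ref{tot_geo_essential} rather than being that lemma's converse, which asserts essentiality.
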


\begin{proof}
    This follows from Corollary \ref{trace_generators}.
\end{proof}

\subsection{Detecting Totally Geodesic Surfaces in a 3-manifold}
Now given a 3-manifold, we describe an algorithm that checks whether it contains a totally geodesic surface. The idea is to enumerate all normal surfaces in the 3-manifold and implement the algorithm in the previous section. The 3-manifold and normal surfaces should first satisfy certain volume constraints. 
By a result of Miyamoto (Theorem 4.2 \cite{MIYAMOTO94}), we obtain a lower bound on the volume of all 3-manifolds containing a closed totally geodesic surface.
Moreover, if a given 3-manifold $M$ contains such a surface, the result also gives an upper bound on the Euler characteristic of a totally geodesic surface.

\begin{lemma}\label{VolumeGenus}
    Let $M$ be a hyperbolic 3-manifold. If $M$ contains an embedded closed totally geodesic surface $F$, then 
    \begin{equation*}        
        \vol M \geq 2 \mu_3(0) \area  F = 4 \pi \mu_3(0) \lvert \chi(F) \rvert,
    \end{equation*}
    where $\mu_3(0) \approx 0.29156$ as in \cite{MIYAMOTO94}. For orientable surfaces $F$ we have that $\vol M \geq 2 \mu_3(0) \area  F \gtrapprox 7.3277$ and for non-orientable $F$ we have that $\vol M \geq 2 \mu_3(0) \area  F \gtrapprox 3.66385$.
    Moreover, if a given 3-manifold M contains a closed totally geodesic surface $F$, then the Euler characteristic of $F$ is at most $\frac{\vol M}{4\pi \mu_3(0)}.$
\end{lemma}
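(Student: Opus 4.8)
The plan is to invoke Miyamoto's theorem (Theorem 4.2 of \cite{MIYAMOTO94}), which controls the volume of a hyperbolic 3-manifold with totally geodesic boundary in terms of the area of that boundary, and then to apply it to the manifold obtained by cutting $M$ along $F$. First I would recall that if $F$ is an embedded closed totally geodesic surface in $M$, then cutting $M$ along $F$ produces a (possibly disconnected) hyperbolic 3-manifold $M \setminus\!\!\setminus F$ with totally geodesic boundary, where the boundary consists of two isometric copies of $F$ when $F$ is separating, or one copy of the orientable double cover of $F$ when $F$ is non-separating (or when $F$ is non-orientable, one works with a regular neighborhood as in the definition of essential given in Section~\ref{Background}). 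In every case, $\vol(M) = \vol(M\setminus\!\!\setminus F)$ and the totally geodesic boundary has total area $2\,\area(F)$. Miyamoto's lower bound states that a hyperbolic 3-manifold $N$ with nonempty totally geodesic boundary satisfies $\vol(N) \geq \mu_3(0)\,\area(\partial N)$ with $\mu_3(0) \approx 0.29156$; applying this to $N = M\setminus\!\!\setminus F$ gives $\vol(M) \geq \mu_3(0)\cdot 2\,\area(F) = 2\mu_3(0)\,\area(F)$.

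Next I would convert the area bound into an Euler-characteristic bound using Gauss--Bonnet. Since $F$ is totally geodesic, its induced metric is hyperbolic (constant curvature $-1$), so $\area(F) = 2\pi\,\lvert\chi(F)\rvert = -2\pi\chi(F)$, valid for both orientable and non-orientable closed surfaces with $\chi(F) < 0$ (note $F$ cannot be a sphere, torus, projective plane, or Klein bottle as it is essential by Lemma~\ref{tot_geo_essential}, and these admit no hyperbolic structure). Substituting yields $\vol(M) \geq 4\pi\mu_3(0)\,\lvert\chi(F)\rvert$, which is the displayed equality-and-inequality in the statement. Rearranging gives $\lvert\chi(F)\rvert \leq \frac{\vol(M)}{4\pi\mu_3(0)}$, i.e.\ the claimed upper bound on the Euler characteristic.

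Finally I would extract the numerical constants for the two parenthetical claims. For an orientable closed essential surface $F$ we have $\chi(F) \leq -2$, so $\vol(M) \geq 4\pi\mu_3(0)\cdot 2 = 8\pi\mu_3(0) \gtrapprox 7.3277$; for a non-orientable closed essential surface the minimal case is $\chi(F) = -1$ (a connected sum of three projective planes), giving $\vol(M) \geq 4\pi\mu_3(0) \gtrapprox 3.66385$. These are just arithmetic with $\mu_3(0) \approx 0.29156$.

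The main obstacle I anticipate is the bookkeeping in the cut-open manifold: one must be careful that $M\setminus\!\!\setminus F$ is genuinely a finite-volume hyperbolic manifold with totally geodesic boundary so that Miyamoto's hypotheses apply, handling the separating versus non-separating cases and, for non-orientable $F$, passing through the orientable double cover or the boundary of a regular neighborhood. The essentiality of $F$ (Lemma~\ref{tot_geo_essential}) is what guarantees the cut-open pieces are not degenerate. Once that geometric setup is in place, everything else is Gauss--Bonnet and arithmetic.
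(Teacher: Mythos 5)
Your proposal is correct and follows essentially the same route as the paper's proof: cut $M$ along $F$ to obtain a hyperbolic manifold with totally geodesic boundary of area $2\area F$, apply Miyamoto's Theorem 4.2, convert area to Euler characteristic via Gauss--Bonnet, and finish with arithmetic on the minimal values of $\lvert\chi(F)\rvert$. You are in fact somewhat more careful than the paper about the separating/non-separating and non-orientable cases, but the argument is the same.
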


It remains to find all surfaces in a given 3-manifold $M$ that satisfy this volume bound to check if it is totally geodesic.

\begin{lemma}\label{finite_isotopy_lemma}
For a triangulated 3-manifold $M$ and some number $e$, there is a finite number of isotopy classes of essential surfaces with Euler characteristic larger than $e$.
\end{lemma}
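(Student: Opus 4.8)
The plan is to combine the classical Kneser–Haken finiteness phenomenon with the linear structure of normal surface space. First I would reduce to normal surfaces: by Theorem~\ref{allsurface_iso_normal}, every essential (hence incompressible) surface in the irreducible manifold $M$ is isotopic to a normal surface, so it suffices to bound the number of isotopy classes of essential \emph{normal} surfaces with $\chi(F) > e$. (If $e \geq 0$ there are no such essential surfaces at all in a hyperbolic manifold except tori and Klein bottles, which are not essential, so the statement is interesting only for $e < 0$.)

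Next I would recall the standard fact that every normal surface decomposes uniquely as a sum, in normal coordinates, of \emph{fundamental} normal surfaces — those lattice points of the (rational, polyhedral) solution cone in $\mathbb{R}^{7t}$ that cannot be written as a sum of two nonzero admissible integral solutions. The set of fundamental surfaces is finite and computable (Haken, Jaco–Oertel; see e.g.\ \cite{JACO1984195}). The key observation is that Euler characteristic is a \emph{linear} functional on normal coordinates: if $F = F_1 + \dots + F_k$ as a normal sum (possibly with repetitions), then $\chi(F) = \chi(F_1) + \dots + \chi(F_k)$. Moreover, for an essential — in particular incompressible and not $\partial$-parallel, hence not a sphere or disk — summand in a hyperbolic manifold one has $\chi(F_j) < 0$ for each piece that is itself a closed essential surface; more carefully, one uses that when $F$ is incompressible the normal sum can be taken so that no summand is a sphere (a vertex-linking normal sphere contributes $\chi = 2 > 0$ but can be removed), so that every remaining summand has $\chi \leq 0$, and summands with $\chi = 0$ are tori/Klein bottles/annuli which only change the isotopy class of $F$ in controlled ways (they can be absorbed or removed for an incompressible $F$).

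Granting this, the bound $\chi(F) > e$ forces $\chi(F_1) + \dots + \chi(F_k) > e$ with each nontrivial closed summand contributing a negative integer bounded above by $-1$ (once spheres are discarded), so the number $k$ of essential summands is at most $|e|$, and each $F_j$ ranges over the finite list of fundamental surfaces. Hence there are only finitely many normal coordinate vectors $\overrightarrow{F}$ arising as such sums with $\chi > e$, and \emph{a fortiori} finitely many isotopy classes of essential surfaces with Euler characteristic larger than $e$.

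I expect the main obstacle to be the bookkeeping around the sphere/torus/annulus summands: the naive statement ``$\chi$ is additive and each summand has $\chi<0$'' is false for arbitrary normal sums because of vertex-linking spheres and because incompressible surfaces can have $\chi = 0$ toral summands appearing in a Haken sum. The careful version is the classical Kneser–Haken finiteness argument: an essential surface of bounded complexity can meet only boundedly many disjoint incompressible-and-not-boundary-parallel pieces, and the standard way to package this is either (i) to invoke directly the Kneser–Haken finiteness theorem for the number of disjoint non-parallel essential surfaces, or (ii) to cite the finiteness of fundamental surfaces together with the additivity of the \emph{absolute value} of Euler characteristic on Haken sums of two-sided incompressible surfaces (Jaco–Oertel). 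Either way the quantitative conclusion is the same, and I would phrase the proof around whichever of these two inputs the paper prefers to assume as a black box.
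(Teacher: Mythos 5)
Your approach differs genuinely from the paper's. The paper's proof leans entirely on Lackenby's Theorem~4.3, which uses the partially flat angle structure on the ideal triangulation (assumed in Section~2.1) and a combinatorial Gauss--Bonnet argument to conclude directly that there are only finitely many normal surfaces of any given genus, hence only finitely many with $\chi > e$; the sentence about fundamental surfaces in the paper's proof is about how one would enumerate them, not about how one proves finiteness. You instead take the classical route: write a normal surface as a Haken sum of fundamental surfaces, use additivity of~$\chi$, and try to bound the number of summands. This is a reasonable alternative strategy, but as written it has a real gap.

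The gap is exactly the one you flag and then do not close: controlling summands with $\chi \geq 0$. After discarding spheres, fundamental surfaces with $\chi = 0$ remain available --- in an ideal triangulation the vertex-linking tori are concrete examples --- and the hypothesis $\chi(F) > e$ imposes no bound whatsoever on how many times such a summand can occur. Your sentence ``the number $k$ of essential summands is at most $|e|$'' only counts the summands of negative Euler characteristic; it does not bound the total number of summands, hence does not bound the set of realizable normal coordinate vectors, and the finiteness conclusion does not follow. Neither of your two proposed black boxes repairs this: Kneser--Haken finiteness bounds the number of pairwise \emph{disjoint} non-parallel incompressible surfaces in a single configuration, which is a different quantity from the number of isotopy classes of bounded-genus surfaces; and citing Jaco--Oertel additivity on Haken sums of incompressible pieces still requires a separate argument that a least-weight incompressible normal surface admits no $\chi = 0$ summands, which is precisely the nontrivial content being skipped. (A minor additional slip: the decomposition of a normal surface into fundamental surfaces is not unique, contrary to what you assert, though this is not where the argument actually fails.) The paper avoids all of this because the angle-structure/Gauss--Bonnet bound controls the quadrilateral coordinates of $F$ directly in terms of $|\chi(F)|$, with no reference to any sum decomposition.
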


Upon finding all normal surfaces, by using Theorem 4.2 of~\cite{JACO1984195} we can determine whether each normal surface is essential in order to obtain a finite list of all essential normal surfaces up to a certain Euler characteristic. 

\begin{algorithm}\label{finding_totally_geodesic}
    Given a triangulated 3-manifold $M$ and a lift of the holonomy representation of $\rho : \pi_1(M) \to SL(2, \Q(\alpha))$, there is an algorithm to enumerate the totally geodesic surfaces in $M$, solving \textsc{Enumerate Totally Geodesic Surfaces}.
    \begin{enumerate}[a)]
        \item If $\text{vol }M < 3.66385$, then return false.
        \item Enumerate all isotopy classes of essential surfaces of $M$ which satisfy the Euler characteristic bounds of Lemma~\ref{VolumeGenus}.
        \item Apply Algorithm~\ref{totally_geodesic} to all surfaces found in the previous step.
    \end{enumerate}
\end{algorithm}

\section{Computations}\label{Computations}

\subsection{Practical Algorithm Considerations}    
Our computations were all performed in SageMath~\cite{sagemath} on KEELING, the School of Earth, Society \& Environment computer cluster at the University of Illinois Urbana-Champaign.
It made use of the programs Regina~\cite{regina} and SnapPy~\cite{SnapPy}. In particular, all surface enumeration was done by Regina based on the work in~\cite{Burton_2013}.
The data sets which we ran our algorithm on come from a variety of censuses in SnapPy. 
More details for these censuses can be found in Section~\ref{Results}.
Geometric information about the manifolds included in these censuses were provided by SnapPy.

\begin{figure}
	\centering
	\includegraphics[scale=0.7]{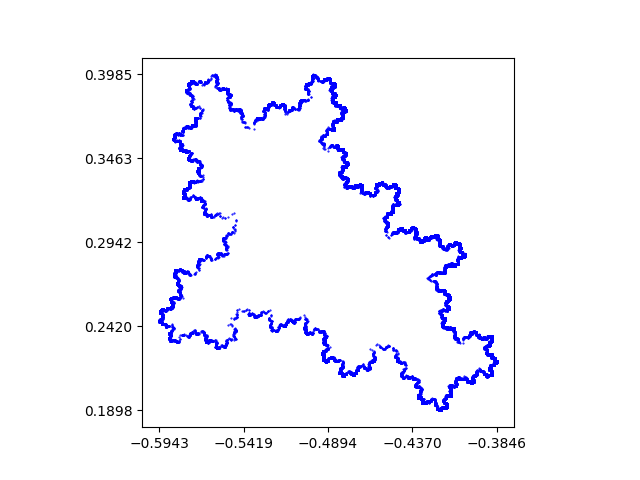}
	\caption{Approximation of the limit set of a quasi-Fuchsian surface that is not totally geodesic. }
	\label{quasifuchsian}
\end{figure}

While the algorithms specify an exact solution to the gluing equations, finding and working with an exact form of the holonomy representation is computationally expensive. 
To speed up and simplify our computations, we instead chose to work with double precision and then check that the traces are real by ensuring that the imaginary parts are less than a specified threshold. 
A middle ground between these two approaches would be to use interval arithmetic along with verified computations of the holonomy representation as found in \cite{Dunfield_Lspace14}.
This would have the advantage of being able to provably rule out any non-totally geodesic surfaces with only very minor performance losses.
However, we opted for double precision because the observed values used to rule out the surfaces were high enough to be unambiguous (nearly always at least 1).
With the large proportion of negative results we expected our code to find, we chose the threshold to be quite large (.01) so that if the algorithm ruled out the surface we could be confident that it did so correctly. 
However, even with this large threshold, it is still unlikely for the algorithm to return a false positive, given that the trace of \textit{every} checked matrix (and recall that the number of these matrices is cubic in the number of generators of the fundamental group of the surface) would have to have its imaginary part below this threshold.
Because of this high threshold, we still decided to do further checks to ensure that every surface the algorithm marked as positive was, in fact, totally geodesic. Some additional sanity checks on our code are also detailed in Appendix~\ref{sanity_checks}

\begin{figure}
	\centering
    \begin{subfigure}[b]{0.43\textwidth}
        \includegraphics[width=\textwidth]{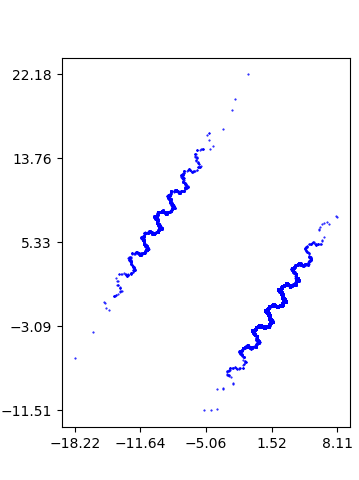}
    \end{subfigure}
    \hfill
    \begin{subfigure}[b]{0.54\textwidth}
        \includegraphics[width=\textwidth]{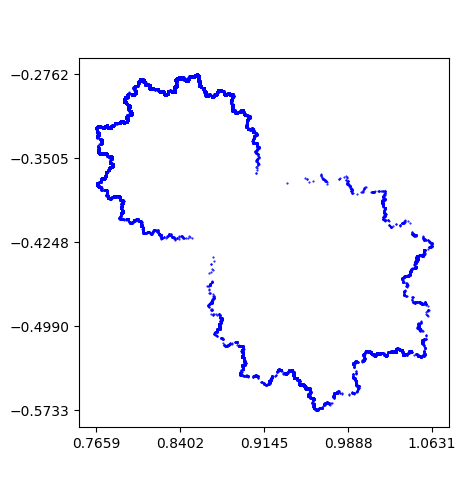}
    \end{subfigure}
	\caption{Approximations of limit sets of surfaces with an accidental parabolic.}
	\label{accidentalparabolic}
\end{figure}

To confirm that a surface identified as possibly totally geodesic is indeed totally geodesic, we plotted the limit set of the Kleinian group $\Gamma$ corresponding to the surface's fundamental group.
Applying an isometry corresponding to a long geodesic in the surface to any point in $\partial \H^3$ will result in a point very close to one of the ends of the geodesic on the boundary. In order to get these points, we need only find the isometries corresponding to long words in the fundamental group of $M$ and then apply them on any point in $\partial \H^3$.
As mentioned in Section~\ref{Background}, if the surface is totally geodesic, then the limit set of $\Gamma$ must be a geometric circle, not just a topological one.
We present some examples of approximations of limit sets of surfaces that are not totally geodesic. 
Figure~\ref{quasifuchsian} is the limit set of an essential quasi-Fuchsian surface that is not totally geodesic found in~\cite{adams_reid_1993}. In particular, it is a surface of genus 2 in the exterior of the knot in Figure 7(a) of~\cite{adams_reid_1993}. Figure~\ref{accidentalparabolic} are limit sets of essential surfaces in the exterior of alternating knots. They are surfaces of genus 2 found in the exteriors of knots $8_{16}$ and $9_{29}$, names for these knots follow the online database, \textit{KnotInfo:Table of Knots}~\cite{knotinfo}. These surfaces are all known to have an accidental parabolic. 
Details on the parameters used to plot these approximations is given in the description of Figure~\ref{limit_set} of Section~\ref{Example}.

It should be noted that some manifolds took significantly longer amounts of time to run the program on than other manifolds. 
Thus, in order to save time and increase the quantity of manifolds computed, the program terminated for manifolds after exceeding a certain chosen runtime. 
This runtime was chosen to be 5,000 seconds, based on the average runtime of census manifolds.
To ensure the algorithm ran on all knot exteriors with at most 12 crossings, a few manifolds whose runtime exceeded 5,000 were allowed but their data is not included in the average calculation or plots in Section~\ref{Results}.

\subsection{Results}\label{Results}
We observed a variety of 3-manifolds provided by databases from SnapPy. 
We mainly looked at link exteriors and covers of orientable cusped hyperbolic manifolds. 
The knot and link exteriors that were considered came from the census of 15 crossing knots and links of Hoste-Thistlethwaite-Weeks \cite{HTLinkExterior}.
We only take link exteriors in $S^3$ with volume larger than 7.2 (Lemma~\ref{VolumeGenus}) as these link exteriors do not contain non-orientable essential surfaces since $S^3$ does not have such surfaces. 
Moreover, we exclude alternating links~\cite{MenascoReid1992} where the result is already known, to get that the list contained 279,649 manifolds.
The triangulation information we used came from the census \textit{HTLinkExteriors} in SnapPy.

\begin{figure}
		\centering
	\centering
    \begin{subfigure}[b]{0.7\textwidth}
        \includegraphics[width=\textwidth]{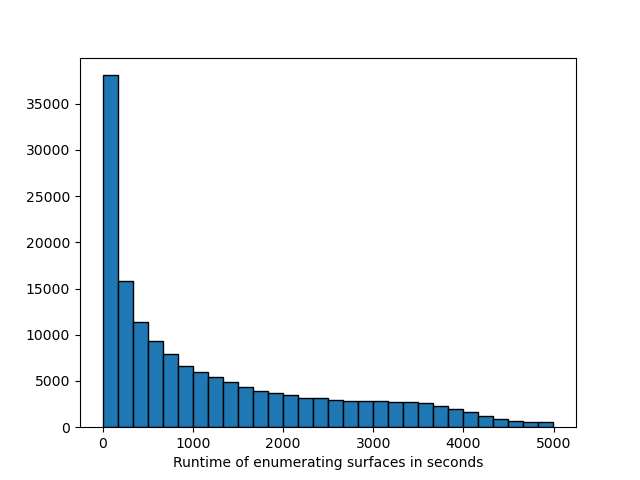}
    \end{subfigure}

    \begin{subfigure}[b]{0.7\textwidth}
        \includegraphics[width=\textwidth]{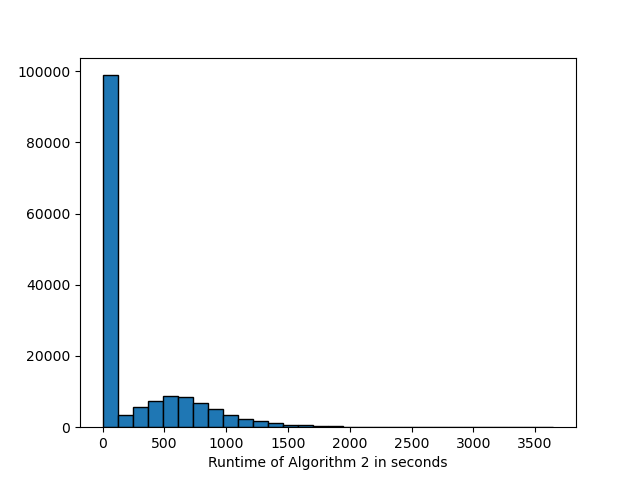}
    \end{subfigure}
	\caption{The top figure is a histogram of runtimes for enumerating surfaces measured in seconds. The bottom one is a histogram of runtimes for running Algorithm~\ref{totally_geodesic}.}
\label{runtime_histogram}
\end{figure}

The second census that was considered are covers of 3-manifolds provided by the \textit{OrientableCuspedCensus} in SnapPy~\cite{Burton2014TheCH}. 
This census consists of orientable cusped hyperbolic 3-manifolds that can be triangulated with at most 9 ideal tetrahedra.
It contains 61,911 manifolds, some of which contain multiple cusps.
To give ourselves plenty of examples to work with we took $n$-fold covers of these manifolds where $n$ allows the volume to be large enough to admit a totally geodesic surface but small enough so that the volume of the resulting cover is at most 20 (at this volume, our algorithm's runtime starts to get prohibitively long).

\begin{figure}
		\centering
		\includegraphics[scale = 0.7]{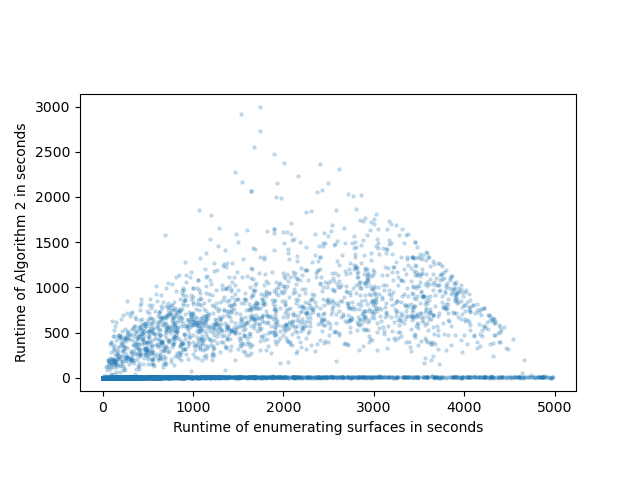}
		\caption{Scatter plot of the two different runtimes in seconds of 5000 randomly chosen manifolds.}  
		\label{runtime_comparison}
\end{figure}

We have currently completed computation for 142,409 manifolds in the \textit{HTLinkExteriors} census in SnapPy and 15,992 covers of manifolds in the \textit{OrientableCuspedCensus} in SnapPy. 
The average total runtime was 1483.37 seconds.

Figure~\ref{runtime_histogram} are histograms of the runtimes of manifolds for which we completed our computation.
We have made the distinction between the time it takes to enumerate surfaces in a manifold and applying Algorithm~\ref{totally_geodesic} to these found surfaces.  
On average, enumerating surfaces constituted 87\% of the the entire runtime whereas applying Algorithm~\ref{totally_geodesic} constituted 13\%.
Since for small manifolds the runtimes for both procedures are very small and have almost no difference this average can be a somewhat coarse indicator.
A more detailed comparison is given in Figure~\ref{runtime_comparison}.
Notice in Figure~\ref{runtime_comparison} that in general, runtimes for enumerating surfaces are relatively larger than the runtime for applying Algorithm~\ref{totally_geodesic}.
It is also interesting to examine results regarding runtimes of enumeration of normal surfaces in~\cite{BurtonComplexity}. In the worst case, finding all normal surfaces below the given genus bound of Algorithm~\ref{finding_totally_geodesic} is known to scale exponentially with the volume of the manifold.
Even in the average case, there is evidence that the number of normal surfaces scales exponentially with the number of tetrahedra, limiting the size of manifolds we can run the algorithm on.

\begin{figure}
		\centering
		\includegraphics[scale = 0.7]{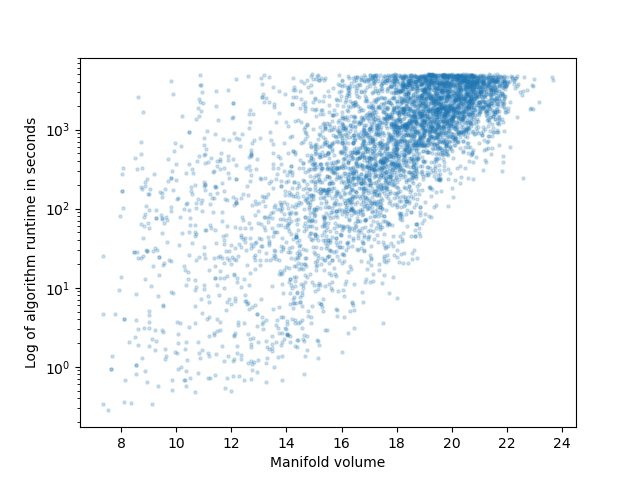}
		\caption{Scatter plot of the volume and log of runtime taken for entire algorithm. A random 5000 manifolds were chosen to be plotted.}
		\label{runtime_volume}
\end{figure}

\begin{figure}
		\centering
		\includegraphics[scale = 0.7]{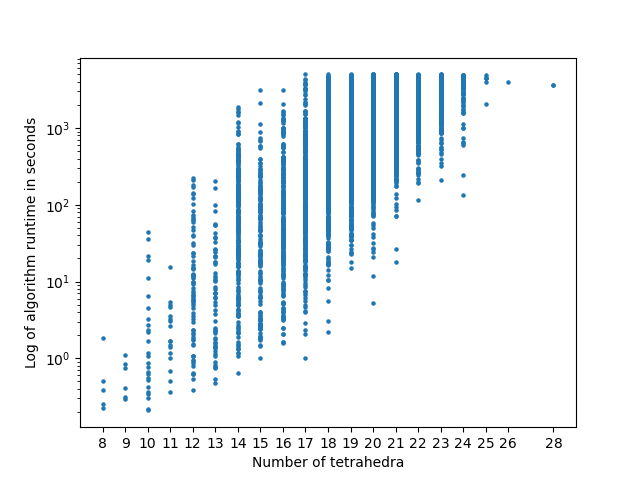}
		\caption{Scatter plot of the number of tetrahedra and log of runtime taken for entire algorithm. Again a random 5000 manifolds were chosen to be plotted.}
		\label{runtime_num_tet}
\end{figure}

We speculate that the number of tetrahedra and volume of the manifold have the biggest effect on runtimes.
Relations between runtimes of our algorithm and number of tetrahedra and volume are presented in Figures~\ref{runtime_volume} and~\ref{runtime_num_tet} respectively.
We chose to plot the log of runtimes to make the relation more explicit.
Moreover, in Figures~\ref{runtime_comparison},~\ref{runtime_volume} and~\ref{runtime_num_tet}, manifolds where no surfaces were detected (hence Algorithm~\ref{totally_geodesic} did not run at all) were not plotted. 

Determining whether or not a given normal surface is essential can be an immensely time consuming computation.
Algorithm~\ref{totally_geodesic} usually takes much less time than checking if a surface is essential, hence we chose to run our algorithm on all normal surfaces only excluding the simple compressible surfaces identified by the method provided by Letscher explained in Appendix~\ref{implementation_details}.
For surfaces found to be totally geodesic, we check their limit sets to make sure that the surface found was indeed totally geodesic or homotopic to a totally geodesic surface.
This surface could be homotopic to a totally geodesic surface because for any totally geodesic surface $F$ (which would be essential by Lemma~\ref{tot_geo_essential}) we can add a trivial handle to that surface to create a non-essential and non-totally geodesic surface $F'$. 
However, our algorithm would still detect $F'$ as a totally geodesic surface because the image of the fundamental group through the holonomy representation would be the same as $F$.
If our algorithm does find such an $F'$, we do know that there exists some $F$ in $M$ which is totally geodesic.

\begin{table}
\centering
{\small
\begin{tabular}{|c|c|c|}
\noalign{\smallskip}\noalign{\smallskip}\hline
Base manifold & Degree of covering & $H_1$ \\
\hline
m003(0,0) & 9 & $\Z_{10} \oplus \Z_{10} \oplus \Z$ \\
m004(0,0) & 9 & $\Z_{4} \oplus \Z_{4} \oplus \Z$ \\
m412(0,0)(0,0) & 3 & $\Z_{2} \oplus \Z_{2} \oplus \Z^2$ \\
s594(0,0) & 3 & $\Z_{2} \oplus \Z_{2} \oplus \Z_{4} \oplus \Z$ \\
s594(0,0) & 4 & $\Z_{2} \oplus \Z_{2} \oplus \Z^4$ \\
s596(0,0)(0,0)* & 3 & $\Z_{2} \oplus \Z^4$ \\
s955(0,0) & 3 & $\Z_{20} \oplus \Z$ \\
s956(0,0) & 3 & $\Z_{2} \oplus \Z_{4} \oplus \Z$ \\
s957(0,0) & 3 & $\Z_{5} \oplus \Z_{20} \oplus \Z$ \\
\hline

\end{tabular}
\caption {List of manifolds which were covers of manifolds in the OrientableCuspedCensus that contained totally geodesic normal surfaces. In this table, the information is enough to uniquely identify these manifolds (note however, that the starred entry needs the additional piece of information that its symmetry group is the product of $\Z_2$ with the octahedral group.)}
\label{table:occ_cover_list}
}
\end{table}

In Table~\ref{table:occ_cover_list} we present a list of manifolds that we found to contain a totally geodesic surface. 
All manifolds were covers of a manifold in the \textit{OrientableCuspedCensus} provided by SnapPy. 
Through private correspondence with Nathan Dunfield, he has informed us that all of these manifolds are commensurable to the figure-8 knot complement \textit{m004} which, as mentioned in Appendix~\ref{menasco_reid_appendix}, is known to have infinitely many \textit{immersed} closed totally geodesic surfaces.

It is worth mentioning that of the 142,409 link exteriors we ran the algorithm on, none contained totally geodesic surfaces. 
Our algorithm finished running for all non-alternating knot exteriors of up to 12 crossings and we anticipate more to be completed as our computations are ongoing. 
These results give strong support for Menasco and Reid's conjecture.

\subsection{An Example Containing a Totally Geodesic Surface}\label{Example}
Here we specifically look in detail at the last manifold in Table~\ref{table:occ_cover_list}, the 3-fold cover of the census manifold \textit{m412(0,0)(0,0)} which we will call $Y$. The triangulation information we used for our computation is recorded as a tight encoding of a Triangulation3 class in Regina:
\begin{equation*}
\verb|1%")"-"(*&"*"+")*\|\text{\textquotesingle}\verb|"(","**/".,+,0"/,-,."0,,,."-*/",*0"+*.*/*0*|
\end{equation*}
This triangulation had 15 tetrahedra and its volume was $15.2241240961448$. 
Algorithm~\ref{finding_totally_geodesic} found one potential totally geodesic surface which we will call $S$. $S$ was a surface of genus $2$ whose corresponding normal coordinate $\overrightarrow{S}$ is given as the following vector:
\begin{equation*}
\begin{aligned}
\overrightarrow{S} = &  (0, 0, 0, 2, 0, 0, 0, 0, 2, 0, 0, 0, 0, 0, 2, 0, 0, 0, 0, 0, 0, \\
& 0, 0, 2, 0, 0, 0, 0, 0, 2, 0, 0, 0, 0, 0, 2, 0, 0, 0, 0, 0, 0, \\
& 2, 0, 0, 0, 0, 0, 0, 0, 0, 2, 0, 0, 0, 0, 0, 2, 0, 0, 0, 0, 0, \\
& 0, 0, 0, 0, 0, 2, 0, 0, 0, 0, 0, 0, 0, 2, 0, 0, 0, 0, 2, 0, 0, \\
& 2, 0, 0, 0, 0, 0, 0, 0, 0, 0, 2, 0, 0, 0, 0, 2, 0, 0, 0, 0, 0) \\
\end{aligned}
\end{equation*}
 
\begin{figure}
		\centering
		\includegraphics[scale = 0.7]{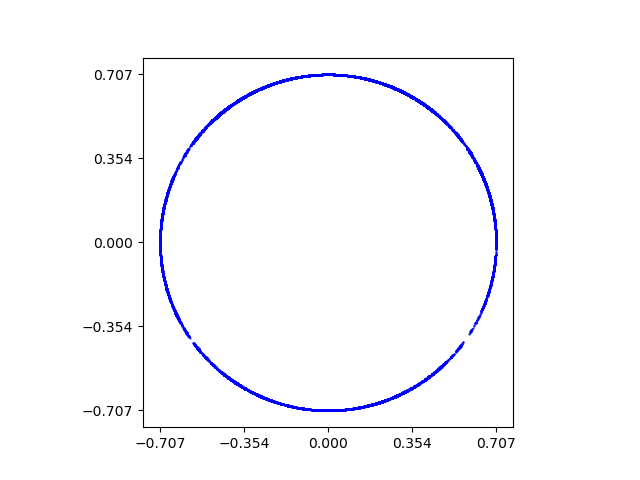}
		\caption{Limit set of the surface $S$.}
		\label{limit_set}
\end{figure}

Let $\Gamma$ denote the fundamental group representation of $S$ into $\pslc$ using the holonomy representation of $Y$ as in Algorithm~\ref{finding_totally_geodesic}. An approximation of the limit set of $S$ is presented in Figure~\ref{limit_set}. All limit set approximations in this paper consist of $10,000$ points obtained by applying up to $2000$ randomly chosen matrices in $\Gamma$ to the complex point $(1, 0)$. It is evident from Figure~\ref{limit_set} that the approximation resembles a geometric circle. 

We also computed the trace field of $\Gamma$. To compute the trace field of $\Gamma$, we computed the traces of all generators and products of pairs and triples of all of the generators. Using SageMath, we calculated the smallest field containing these traces. The result was that the trace field of $\Gamma$ is $\Q(\sqrt{3})$. The traces of all elements of $\Gamma$ are real and the surface $S$ is Fuchsian. Note that all coordinates of the vector of $S$ are even. Hence $S = 2F$ for some non-orientable surface $F$. By above, we have that $F$ is totally geodesic while $S$ is just Fuchsian.

\section{Future Work}\label{FutureWork}

One avenue we hope to expand our work is to enlarge the class of surfaces to look at.
One possibility could be to examine embedded totally geodesic surfaces with boundary.
There are known embedded totally geodesic surfaces with boundary, for example \textit{Seifert surfaces} in a knot complements (see \cite{Adams_Schoenfeld05}) and thrice-punctured spheres in hyperbolic 3-manifolds (see \cite{Adams1985}).
It would be interesting to determine if there are additional examples of embedded totally geodesic surfaces with boundary which are not Seifert surfaces nor thrice punctured spheres.

Using the idea of \textit{spun normal surfaces}, this would be a natural and reasonable extension.
Spun normal surfaces are a generalization of normal surfaces due to unpublished ideas of Thurston in the context of hyperbolic 3-manifolds with cusps which allows for infinitely many triangles in tetrahedra.
These infinitely many triangles would not be a problem as the surfaces generally deformation retract onto a subcomplex with only finitely many triangles \cite{Tillmann2004NormalSI}.
The key here is that the number of quads is always finite, and can be solved for using a similar set of equations to regular normal surfaces.
So after enumerating the normal surfaces in this way, we can find finite cellulations of them by quads and triangles and the remainder of the algorithm will work without much modification.

We also plan to extend our current work on plotting limits sets of surfaces and build on the idea of Thurston. It is known that one can determine whether or not a surface is essential by looking at the shape of its limit set. Utilizing our method may produce a computationally efficient and practical algorithm that verifies if a given normal surface is essential.

\bibliographystyle{plainurl}
\bibliography{bib}

\appendix
\section{Background}\label{background_appendix}

\subsection{Hyperbolic Structures and the Holonomy Representation}\label{hyperbolic_background_appendix}
Recall that for an ideal tetrahedron, you can specify an embedding of this ideal tetrahedron into hyperbolic space by using its shape parameters.
Shape parameters are important because they allow us to computationally determine whether a given ideal triangulation can be given a hyperbolic structure following the procedures in \cite{WeeksHyperbolic}.
In order to build a hyperbolic structure on $M$ from an ideal triangulation, all that needs to be done is to choose shape parameters which satisfy two types of criteria, the edge criteria and the cusp criteria. 
The edge criteria states that as one glues the tetrahedra, face by face, around each edge, the last face glues up correctly with the first face. 
The second type of criterion, one for each cusp, states that each cusp will end up being an honest-to-god euclidean torus with a complete structure, that is, a torus which tiles the euclidean plane.
We omit the details here, but both of these criteria can be stated as simple linear equations in the logs of the shape parameters.
Furthermore, it can be arranged for this system to reduce to a balanced system, ensuring that if a solution exists, it will be unique.

Given an ideally triangulated hyperbolic 3-manifold $M$, shape parameters for the tetrahedra, and an element of the fundamental group $\gamma$, we can find the holonomy representation in the following way using the idea of the \textit{developing map} due to Thurston in \cite{Thurston1979TheGA}.
Let $\gamma$ be a closed curve in $M$.
Isotope $\gamma$ so that it is transverse to the 2-skeleton of $M$.
Then we can find a corresponding list of tetrahedra $(\tau_1, \tau_2, ..., \tau_n)$ that $\gamma$ goes through.
Place the first tetrahedron $\tau_1$ in $\mathbb{H}^3$ so that it is of the correct shape. 
Then place subsequent copies of tetrahedra $\tau_i$ in this list so that they are glued appropriately until the entire list has been exhausted. 
The projective transformation taking the initial tetrahedron to the final tetrahedron can then be represented by a matrix in $\pslc$ and will be the holonomy representation of the element of the fundamental group.
Note that this only needs to be done for a generating set to get complete information about the holonomy representation.

\subsection{Conjecture~\ref{MenReid} for Links}\label{menasco_reid_appendix}
In the example of \cite{MenascoReid1992}, the totally geodesic surface separates components of the link.
Leininger in \cite{Leininger2006} constructs an infinite family of hyperbolic link complements each containing a totally geodesic surface $F$ which has the property that $F$ separates $S^3$ into two components, each containing the same number of components of the link.
It is noticed in \cite{Leininger2006} that the techniques of Adams~\cite{Adams1985} yields totally geodesic surfaces in link complements that separate distinct nonzero numbers of components of the link.
DeBlois in \cite{DeBlois2006} showed that there exist hyperbolic knots in rational homology spheres containing such surfaces.
Moreover, Reid~\cite{reid_1991} showed that \textit{immersed} closed totally geodesic surfaces exist in the figure-8 knot complement and Aitchison and Rubinstein~\cite{AitchRub97} computationally found such surfaces in two other knot complements.

\subsection{Normal Coordinates and Solution Spaces}\label{normal_coords_appendix}
Let $\overrightarrow{x} = (x_1, x_2, \dots, x_{7t})$ be some $7t$-tuple of non-negative integers.
For $\overrightarrow{x}$ to correspond to a normal surface it must satisfy the following two constraints.
First, in order for it to be possible to realize $F$ inside a given tetrahedron, there must be at most one quadrilateral disk type in each tetrahedron of $\mathcal{T}$.
Moreover, for two tetrahedra glued along a common $2$-face, the number of incident elementary disks on both sides must match up to form a surface. 
For a given arc type in a $2$-face of $\mathcal{T}$ and a tetrahedron on which this $2$-face lies one, there are exactly two disk types that contribute to that given arc type.
Hence, for each arc type of a $2$-face, the sum of the number of disk types that contribute to that arc type on both sides must be equal. 
For $\overrightarrow{x} = (x_1, x_2, \dots, x_{7t})$, the above constraints can be expressed as a system of \textit{matching equations}:
\begin{align*}
    &x_i + x_j = x_k + x_l\\
    &0 \leq x_i\;(0 \leq i \leq 7t)
\end{align*}
We have one equation per arc type in $\mathcal{T}^{(2)}$.     
All non-negative solutions to the matching equations form a linear cone $\mathcal{S}_\mathcal{T} \subset \mathbb{R}^{7t}$ called the \textit{normal solution space}.

\subsection{Computational Representation of Algebraic Field Elements}\label{field_appendix}
Recall that for a given algebraic field extension $\Q(\alpha)$, we can represent every element of the field uniquely as a $\Q$-linear combination of the basis $1, \alpha, \alpha^2,\dots,\alpha^{n-1}$
Thus, distinguishing elements of this field can be done computationally without any ambiguity.
The algebraic numbers $a$ and $b$ in $\Q(\alpha)$ can be represented by the degree $n-1$ polynomials $f_a(\alpha)$ and $f_b(\alpha)$.
Then the sum, product, and quotient of these numbers can be computed easily from these representations:
$f_{a\pm b}(\alpha)$ will be equal to $f_a(\alpha) \pm f_b(\alpha)$. 
$f_{ab}(\alpha)$ will be equal to the remainder of dividing $p(\alpha)$ by $f_a(\alpha)f_b(\alpha)$, and $f_{a/b}(\alpha)$ will be equal to the remainder after dividing $p(\alpha) + f_a(\alpha)$ by $f_b(\alpha)$.

\section{Proofs}\label{proofs}

\begin{proof}[Proof of Proposition~\ref{orientable_homology}]
    Note that the proof provided in \cite{Dunfield_2022} also applies to cusped finite-volume hyperbolic 3-manifolds by passing to the compact core.
\end{proof}

\begin{proof}[Proof of Lemma~\ref{tot_geo_essential}]
    Let $\alpha$ be a closed loop in $F$ corresponding to a non-trivial element in $\pi_1(F)$. We know that $\alpha$ is homotopic to a closed geodesic loop $\gamma$. Since $F$ is totally geodesic, then $\gamma$ is also a closed geodesic in $M$.
    Note that the universal cover of $M$ is $\mathbb{H}^3$ with covering map $p : \mathbb{H}^3 \rightarrow M$. Now, since $p$ is also a local isometry, $\gamma$ lifts to an embedded curve $\Tilde{\gamma}$ which is a non-trivial geodesic segment of some infinite geodesic ray. This implies that the lift corresponds to a non-trivial element of $\pslc$. Hence $\gamma$ corresponds to a non-trivial element of $\pi_1(M)$ and thus $\pi_1(F)$ injects into $\pi_1(M)$, proving the lemma.
\end{proof}

\begin{proof}[Proof of Lemma~\ref{fundamental_skeleton_surface}]
    We can simply construct $\mathcal{D}$ by taking the dual of $\mathcal{C}$.
    
    Take a spanning tree $\mathcal{T}$ of the 1-skeleton of $\mathcal{D}$. Quotient out the 1-skeleton of $\mathcal{D}$ by $\mathcal{T}$ to use that as the basepoint. The remaining edges in the 1-skeleton form a generating set with relators given by the 2-cells of $\mathcal{D}$.
\end{proof}

\begin{proof}[Proof of Lemma~\ref{fundamental_skeleton_manifold}]
    Take $M$ to be an ideally triangulated 3-manifold and cellulation $\mathcal{C}$. Construct an ordinary triangulation $\hat{\mathcal{C}}$ by adding vertices to the triangulation. 
    Then take $\mathcal{D}$ to be the 2-skeleton of the dual of $\hat{\mathcal{C}}$.
    Since $\mathcal{C}$ has no vertices, it will deformation retract onto $\mathcal{D}$ by blowing up from the vertices.
     
    Take a spanning tree $\mathcal{T}$ of the 1-skeleton of $\mathcal{D}$. Quotient out the 1-skeleton of $\mathcal{D}$ by $\mathcal{T}$ to use that as the basepoint. The remaining edges in the 1-skeleton form a generating set with relators given by the 2-cells.
\end{proof}

\begin{proof}[Proof of Lemma~\ref{skeletal_embedding_lemma}]
    The algorithm terminates because there are finitely many edges in $\mathcal{T}_F$ and $\mathcal{T}_M$.
    
    Any loop from the basepoint disk in $F$ is homotopic in $F$ to a cycle in the dual 1-skeleton of $F$. Let this cycle consist of the edges $e_1, \dots, e_n$. In the process above, for every $i$, the path from the end of $e_i$ back to the basepoint will be the reverse of the path from the basepoint to the beginning of $e_{i+1}$. These paths will cancel out giving a sequence of edges in the dual 1-skeleton of $M$ which is homotopic to the original loop. This then shows that this is the embedding map induced by inclusion from $F$ to $M$.
\end{proof}

\begin{proof}[Proof of Lemma~\ref{VolumeGenus}]
    Let $F$ be a connected embedded totally geodesic surface in $M$. Define $\overline{M}$ to be the 3-manifold obtained by cutting $M$ along $F$. 
    Observe that the Euler characteristic of $\partial\overline{M}$ is $2\chi(F)$. 
    By Theorem~4.2 of \cite{MIYAMOTO94}, we have $\frac{\vol \overline{M}}{\area \partial\overline{M}} \geq \mu_3(0).$
    Thus,
    \begin{align*}
       &\frac{\vol \overline{M}}{\area \partial\overline{M}} = \frac{\vol M}{2\area F} \geq \mu_3(0)\\
       \Rightarrow \quad &\vol M \geq 2 \area F \cdot \mu_3(0) = 2 \cdot \lvert 2\pi\chi(F) \rvert \cdot \mu_3(0).
    \end{align*}
    Thus, for orientable $F$, we get $\vol M \geq 2 \mu_3(0) \area  F \gtrapprox 7.3277$ and for non-orientable $F$, we get $\vol M \geq 2 \mu_3(0) \area  F \gtrapprox 3.66385$.\\
    For the second part of the lemma, let $N$ be a 3-manifold containing a totally geodesic surface. Hence, we get
    \begin{align*}
       &\area F \leq \frac{\vol M}{2 \mu_3(0)}\\
       \Rightarrow \quad & \lvert 2\pi\chi(F) \rvert \leq \frac{\vol M}{2 \mu_3(0)}\\
       \Rightarrow \quad & \lvert \chi(F) \rvert \leq \frac{\vol M}{4 \pi \mu_3(0)}.
       \end{align*}
\end{proof}

\begin{proof}[Proof of Lemma~\ref{finite_isotopy_lemma}]
By Theorem~\ref{allsurface_iso_normal} it suffices to examine all normal surfaces which correspond to an admissible integral vector~\cite{Haken}.
Using Algorithms 3.1 and 3.2 in~\cite{Burton_2013}, we can enumerate all fundamental normal surfaces in $M$. All normal surfaces can be found as a linear combination of fundamental normal surfaces.
Observe Theorem 4.3 of \cite{Lackenby2007AnAT} states that given a genus $g$, there are only finitely many normal surfaces in $M$ which have genus at most $g$.
This implies that there are only finitely many normal surfaces up to a certain Euler characteristic since any disconnected surface can be viewed as the union of its connected components.
\end{proof}

\section{Additional Implementation Details}\label{implementation_details}
The algorithm was run on 200 jobs simultaneously every 7 days walltime. 
Jobs were run on one of three partitions with the following specifications:
\begin{enumerate}[a)]
    \item The first partition has 20 nodes each with two Xeon CPU E5-2640 0 @ 2.50Ghz, which has six cores. 
    \item The second partition has 16 nodes each with two Xeon CPU E5520 @ 2.27GHz, which has four cores.
    \item The third partition has 4 nodes each with Xeon CPU E5-2690 v3 @ 2.60GHz.
\end{enumerate}

Some algorithms that check for essential surfaces are mentioned here. 
One such method presented in Regina aims to detect a compressing disk by cutting the 3-manifold along the given surface, retriangulating, then searching for a compressing disk in each component of the cut-open triangulation.
This technique is based on the work of Jaco, Oertel which itself is considered impractical since it may take double exponential time.
Practical implementations are explored in~\cite{Burton2012ComputingCE}.

Another idea presented by David Letscher aims to detect certain compressing disks in normal surfaces: let $e$ be an edge in the triangulation of a manifold $M$ and $F$ a normal surface in $M$.
If every tetrahedron glued to $e$ contains a quadrilateral of $F$ that separates $e$ from its unique disjoint edge in the tetrahedron then these quadrilaterals form a tube which contains an obvious disk $D$ embedded in $F$.
This implies that either $D$ is a compressing disk and $F$ is compressible or that $F$ is not least weight, meaning that it is isotopic to another normal surface that has strictly fewer intersections with the 1-skeleton of the manifold.
Note that as an algorithm this method is efficient in finding potential compressible surfaces of a specific shape, but does not always definitively determine whether or not a surface is essential.

\section{Further Sanity Checks}\label{sanity_checks}

In order to check that our implementation of Algorithm~\ref{skeletal_embedding} was correct, we also created an algorithm which not only finds the generators of the fundamental group of the surface, but additionally returns a set of relations in order to get a full presentation of the fundamental group of the surface. We did this by identifying disks in our normal surface and taking the generators of the boundary of these disks in order.

\begin{figure}
		\centering
		\includegraphics[scale = 0.4]{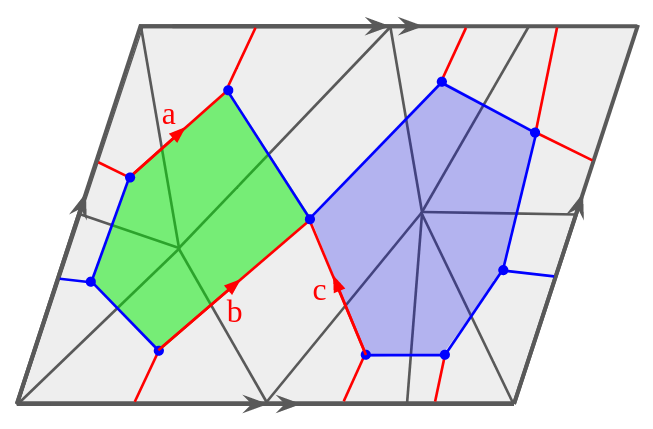}
		\caption{Two disks in a cellulation of a surface which give some relations. The green disk gives the relation $ab^{-1}$ and the blue one gives the relation $c$.
        }
		\label{relation_disks}
\end{figure}

Finding these relations has several benefits. Firstly, the fundamental group of an orientable genus $g$ surface can always be specified by a presentation with $2g$ generators $a_1,\dots,a_g, b_1,\dots, b_g$ and a single relation of the form $\prod_{i=1}^g a_ib_ia_i^{-1}b_i^{-1}$ \cite{EppsteinFundamentalGroup}. For non-orientable surfaces, there is a similar presentation with $g$ generators $c_1,\dots,c_g$ and the single relation is $\prod_{i=1}^g c_i^2$. This comes from the fact that every surface can be canonically constructed as a gluing of a polygon with an even number of sides. As a CW decomposition, this gives us a CW structure with a single 0-cell, $2g$ 1-cells in the orientable case ($g$ 1-cells for non-orientable surfaces), and a single 2-cell.

Given the fundamental group of a 2-complex, in order to check it is the fundamental group of a surface, all that needs to be shown is that it is isomorphic to a group of the above type. We can do this in the following steps, using a procedure laid out in remark 6.14 of \cite{dinakar_nathan}.
\begin{enumerate}[a)]
    \item Simplify the presentation so that it has a single relation.
    If this cannot be done, then it cannot be the presentation of a surface.
    \item This presentation gives us a formula for how to construct the CW complex of a surface. We take a single 0-cell, attach a 1-cell for each generator, and then the relation word tells us how to attach the lone 2-cell.
    \item What remains to check is that this is topologically a surface. That is, around each point in our complex, we have a neighborhood that looks like $\R^2$. This can be done in three steps:
    \begin{enumerate}[i)]
        \item We know all the interior points of the 2-cell have this property, so we are done there.
        \item For any point on one of the 1-cells, we just need to check that the 2-cell glues to it twice. This amounts to checking that for each generator $g$ that appears in the relator word, $g$ and $g^{-1}$ together appear exactly twice. For example, the word $ab^{-1}ab$ would be a valid relator but $aba^{-1}b^{-1}ab$ would not be valid since you have an $a$ and two $a^{-1}$ in the word. For an orientable surface, we need something stronger, that $g$ and $g^{-1}$ appear exactly once \textit{each}. Here, $ab^{-1}ab$ would not be a valid word since $a$ appears twice and $a^{-1}$ does not appear at all.
        \item We check that the neighborhood around the 0-cell is homeomorphic to a disk. That is, we must check that the corners of the 2-cell all glue up correctly to the 0-cell. One way to do this is by checking that as you glue the opposite edges of the disk corresponding to the 1-cells, all of the corners of the disk get identified.
    \end{enumerate}
\end{enumerate}

We performed this check on many examples to ensure that the code which gives the generating set was working correctly.

Additionally, we can check our implementation of Algorithm~\ref{skeletal_embedding} by applying the resulting embedding map on the relations we find in our surface. They should always end up being trivial, as loops which bound disks in the surface also bound disks in the manifold. We also applied this check on examples to ensure that our embedding code was working correctly as well.

\section{Code Availability}\label{code_availability}

The main portion of the code used in this paper at this Harvard Dataverse link~\cite{DVN/3YGSTI_2024}:

\href{https://doi.org/10.7910/DVN/3YGSTI}{https://doi.org/10.7910/DVN/3YGSTI}.

In order to run this code, you will need to install Sagemath, SnapPy, and Regina on your computer.
To run this code on manifolds with multiple cusps, the default version of Regina cannot be used as it does not enumerate surfaces in triangulations with multiple cusps. Instead we used Nathan Dunfield's fork of regina-normal/regina, specifically the multicusp-closed branch. This adds in the ability to enumerate surfaces for manifolds with multiple cusps.
\end{document}